\date{}
\newtheorem{theorem}{Theorem}[section]
\newtheorem{lemma}[theorem]{Lemma}
\newtheorem{claim}[theorem]{Claim}
\newtheorem{corollary}[theorem]{Corollary}
\newtheorem{remark}[theorem]{Remark}
\newtheorem{proposition}[theorem]{Proposition}
\def\MA{Monge--Amp\`ere }
\def\K{K\"ahler }
\def\i{\sqrt{-1}}
\def\del{\partial}
\def\dbar{\bar\partial}
\def\ddbar{\del\dbar}
\def\ra{\rightarrow}
\newcommand{\RR}{\mathbb{R}}
\newcommand{\CC}{\mathbb{C}}
\def\del{\partial}
\def\o{\omega}
\def\vp{\varphi}
\def\beq{\begin{equation}}
\def\eeq{\end{equation}}
\def\bi#1{\bibitem{#1}}
\def\PSH{\mathrm{PSH}}
\def\SH{\mathrm{SH}}
\def\h#1{\hbox{#1}}
\def\Re{\hbox{\rm Re}}
\def\b#1{\bar{#1}}
\begin{document}
\title{Kiselman's principle, the Dirichlet problem for
the Monge--Amp\`ere equation, and rooftop obstacle problems}

\author{Tam\'as Darvas and Yanir A. Rubinstein}
\maketitle

\def\q{\quad}
\def\beq{\begin{equation}}
\def\eeq{\end{equation}}
\def\beqno{\begin{equation*}}
\def\eeqno{\end{equation*}}
\def\eaeq{\end{aligned}}
\def\baeq{\begin{aligned}}
\def\bpf{\begin{proof}}
\def\epf{\end{proof}}
\def\calA{{\mathcal{A}}}
\def\opcit{\underbar{\phantom{aaaaa}}}
\def\env{\operatorname{env}}
\def\bmin{b_{\env}}
\def\benv{b_{\env}}
\def\polishl{\char'40l}
\def\Kolodziej{Ko\polishl{}odziej} \def\Blocki{B\polishl{}ocki}

\begin{abstract}
First, we obtain a new formula for Bremermann type upper envelopes, that arise
frequently in convex analysis and pluripotential theory, in terms
of the Legendre transform of the convex- or plurisubharmonic-envelope of the boundary
data. This yields a new relation between solutions of the Dirichlet problem for the
homogeneous real and complex Monge--Amp\`ere equations and Kiselman's minimum principle.
More generally, it establishes partial regularity for a Bremermann envelope 
whether or not it solves the \MA equation.
Second, we prove the second order regularity of the solution of the free-boundary problem for the Laplace equation with a rooftop obstacle, based
on a new a priori estimate on the size of balls that lie above the non-contact set.
As an application, we prove that convex- and plurisubharmonic-envelopes of rooftop obstacles have bounded second derivatives.
\end{abstract}

\section{Introduction}

In this article we give a new formula for the solution
of the Dirichlet problem for the homogeneous real and complex \MA equation
(HRMA/HCMA) on the product of either a convex domain and Euclidean space in the real
case, or a tube domain and a K\"ahler manifold in the complex case.
This is partly inspired by Kiselman's minimum
principle \cite{Kiselman} and recent work of Ross--Witt-Nystr\"om \cite{RWN}.
Our formula involves the convex- or plurisubharmonic-envelope of a family of functions on the Euclidean
space or the manifold, and
the Legendre transform on the convex domain. Consequently, one could hope to develop
the existence and regularity theory
for both weak and strong solutions using such a formula. In this article and in its sequels we develop this approach.

The regularity properties of the Legendre transform
are classical. Thus, one is naturally led to study the regularity
properties of the convex- or plurisubharmonic-envelope of a family of functions.
In the case of single function with bounded second derivatives,
the regularity of such envelopes was studied by
Benoist--Hiriart-Urruty, Griewank--Rabier, and Kirchheim--Kristensen,
\cite{BenoistHiriartUrruty,GriewankRabier,KirchheimKristensen}
(see also \cite[\S X.1.5]{HL2}) in the convex case, and by
Berman and Berman--Demailly \cite{Be,BD}
in the plurisubharmonic (psh) setting.
The convex- or psh-envelope of a family of functions is, by definition, the corresponding envelope of
the (pointwise) infimum of that family. However,
already when the family consists of two functions, their minimum
is only Lipschitz. Thus, our second goal here is to extend the aforementioned regularity results to such a setting.

The approach we take to achieve this goal is to study,
more generally, the analogous {\it subharmonic envelope}.
The subharmonic
envelope of a `rooftop obstacle' of the form  $\min\{b_0,\ldots,b_k\}$
is, of course, just the solution of the free-boundary
problem for the Laplace equation associated to this obstacle.
Our first regularity result concerning envelopes is that
the solution to the free-boundary problem for the Laplace equation associated to a
such a rooftop obstacle, for functions $b_i$ with finite $C^2$ norm, also has finite
$C^2$ norm, along with an a priori estimate.
Aside from basic regularity tools from the
theory of free-boundary problems associated to the Laplacian,
this involves a new a priori estimate on the size of a ball
that lies between the rooftop and the envelope.
This result stands in contrast to the results of
Petrosyan--To \cite{PetrosyanTo} that show that
the subharmonic-envelope is $C^{1,\frac12}$ and no better
for more general rootop obstacles.

Since the subharmonic-envelope always lies above both the convex- and the
psh-envelope this allows us to establish the regularity of
the latter envelopes as well.

An important application
that makes an essential use of our results is the determination of the
Mabuchi metric completion of the space of K\"ahler potentials,
that is treated in a sequel \cite{D2014}.

\section{Main results}

Our first result concerns a new formula for the solution of the HRMA/HCMA on certain
product spaces. While the real result resembles the complex result, it is
not implied by it directly. Thus, we split the exposition into two
(\S\ref{HCMRSubSec}--\S\ref{HRMRSubSec}).
Our second result concerns the regularity of subharmonic-, convex-, and psh-envelopes
of a `rooftop' obstacle. The regularity of the latter two  (\S\ref{CVXPSHRegSubSec})
is a consequence of that of the former (\S\ref{FBPSubSec}).
In passing, we also establish the Lipschitz regularity of the psh-envelope
associated to a general Lipschitz obstacle.

\subsection{A formula for the solution of the HCMA}
\label{HCMRSubSec}

Suppose $(M,\omega)$ is a compact, closed and connected K\"ahler manifold and
let $K \subset \Bbb R^k$
be a bounded convex open set. Denote by $K^{\Bbb C} = K \times \Bbb R^k$
(considered as a subset of $\Bbb C^k$)
the convex tube with base $K$.
Let $\pi_2: K^\CC \times M \to M$ denote the natural projection,
and denote by
$\PSH(K^\CC\times M,\pi_2^\star\o)$ the set of 
$\pi_2^\star\o$-plurisubharmonic functions.
We seek bounded $\RR^k-$invariant solutions $\vp \in L^\infty \cap \PSH(K^\CC\times M,\pi_2^\star\o)$ of the problem
\beq\label{HCMAEq}
(\pi_2^\star\o+\i\ddbar \vp)^{n+k}=0 \h{\ in $K^\CC\times M$},
\q
\vp=v \h{\ on $\del K^\CC\times M$},
\eeq
where the boundary data $v$ is bounded, $\RR^k-$invariant and
$v_s:=v(s,\,\cdot\,)\in \PSH(M,{\omega}), s \in \del K$.

Some care is needed in defining the sense in which the boundary data is attained
since the functions involved are merely bounded.
In \eqref{HCMAEq}, by ``$\vp=v \h{\ on $\del K^\CC\times M$}$"
we mean
that for each $z\in M$ the convex function
$\vp_z:= \vp(\,\cdot\,,z)$ is continuous up to the boundary of $K$ and satisfies
$\vp_z|_{\partial K}=v_z$.
This choice of boundary condition implies that
$v_z \in C^0(\partial K)$,
and we will assume this condition on the boundary data throughout. 

The study of the Dirichlet problem for the complex \MA equation goes back to Bremermann and Bedford--Taylor \cite{Bremermann,BT}. In particular, their results show that one should look for the solution as an upper envelope:
\beq\label{uUpperEnvEq}
\vp:=\sup\{w\in L^\infty\cap\PSH(K^\CC\times M,\pi_2^\star\o): \ w \textup{ is }\RR^k\textup{-invariant and }
w|_{\partial K^\CC\times M}\le v\},
\eeq
generalizing the Perron method for the Laplace equation, where $w|_{\partial K^\CC\times M}\le v$ means that $\limsup_{s \to s_0} w(s,z) \leq v(s_0,z)$ for all $z \in M, \ s_0 \in \del K$.
It is not immediate, but as we will prove in Theorem \ref{envdual}, $\vp$ is upper semi-continuous on $K^\CC \times M$.
Assuming this for the moment, by Bedford--Taylor's theory
$\vp$ solves \eqref{HCMAEq}
(in general, 
further conditions are needed on $v$ in order to ensure
that $\vp|_{\del K^\CC\times M}=v$, as discussed below in Remark \ref{SolutionRemark}).

Our first result gives a different formula for expressing $\vp$, regardless of whether $\vp$ assumes $v$ on the boundary.
It involves the psh-envelope operator solely in the $M$ variables,
and the Legendre transform solely in the $K$ variables. The psh-envelope is
the complex analogue of the convexification operator (or double Legendre transform) in the real setting,
and is different than the upper envelope in that, roughly, it involves functions and not boundary
values thereof.
Given a family of upper semi-continuous bounded functions $\{f_a\}_{a\in\calA}$ parametrized
by a 
set $\calA$, set
$$
P\{f_a\}_{a\in\calA}:=\sup\{h\in\PSH(M,\o)\,:\, h(z)\le\inf_{a\in\calA}\{f_a(z)\}, \,\forall\, z\in M\}.
$$
As each $f_b$ is upper semi-continuous, it follows that the upper semi-continuous regularization satisfies $\textup{usc}(P\{f_a\}_{a\in\calA}) \leq f_b$, hence by Choquet's lemma $\textup{usc}(P\{f_a\}_{a\in\calA})$ is a competitor for the supremum, which in turn implies $P\{f_a\}_{a\in\calA}= \textup{usc}(P\{f_a\}_{a\in\calA})\in \PSH(M,\o)$.

Given a function $f=f(s,z)$ on $K\times M$ (that we consider
as a family of functions on $K$ parametrized by $M$), we let
\beq\label{LegTrans1Eq}
f^\star(\sigma,z)=f^\star(\sigma):=\inf_{s\in K}[f(s,z)-\langle\sigma,s\rangle].
\eeq
This is the {\it negative} of the usual Legendre transform solely in the $K$-variables, in particular, it maps convex functions to concave functions, and vice versa. Despite this, we also refer to it sometimes as the partial Legendre transform, and we often omit the dependence
of the function on the $M$ variables in the notation. Here $\langle\,\cdot\,,\,\cdot\,\rangle$
is the pairing between $\RR^k$ and its dual. Conversely,
if $g=g(\sigma,z)$ is a
function on $\RR^k\times M$ taking values in $[-\infty,\infty)$,
where $\RR^k$ is considered as the dual vector space to the copy of $\RR^k$
containing $K$, then
\beq\label{LegTrans2Eq}
g^\star(s,z)=g^\star(s):=\sup_{\sigma\in \RR^k}[g(\sigma,z)+\langle\sigma,s\rangle].
\eeq
Note that $f^{\star\star}=f$ if and only if $f$ is convex, lower semicontinuous and nowhere
equal to $-\infty$ (we do not allow the constant function $-\infty$), 
and otherwise $f^{\star\star}$ is the convexification of $f$, namely,
the largest convex function majorized by $f$
\cite{Mand,Fenchel,Rockafellar}.

\begin{theorem}
\label{envdual}
Assume that $v$ is bounded, $v_s=v(s,\,\cdot\,)\in \PSH(M,{\omega})$  and $v_z = v(\cdot,z) \in C^0(\partial K)$ for all $s\in\del K$, $z \in M$. Then $\vp$ as defined in \eqref{uUpperEnvEq} is upper semi-continuous and
\begin{equation}\label{main_identity}
\vp(h,z)=(P\{v_s-\langle s,\sigma\rangle\}_{s\in\del K})^\star(h,z)
=\sup_{\sigma\in\RR^k}[P\{v_s-\langle s,\sigma\rangle\}_{s\in\del K}(z)+\langle h,\sigma\rangle], \ h\in K,z \in M.
\end{equation}
Equivalently, $\vp^\star(\sigma,z)=
\inf_{s\in K}[\varphi(s,z)-\langle\sigma,s\rangle]=P\{v_s-\langle s,\sigma\rangle\}_{s\in\del K}(z)$.
\end{theorem}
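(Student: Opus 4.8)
The plan is to establish the equivalent ``Legendre-dual'' form of the identity, namely that for each fixed $\sigma \in \RR^k$ the partial Legendre transform $\vp^\star(\sigma,z) = \inf_{s\in K}[\vp(s,z)-\langle\sigma,s\rangle]$ equals the $\o$-psh-envelope $P\{v_s - \langle s,\sigma\rangle\}_{s\in\del K}(z)$; the main identity \eqref{main_identity} then follows by applying the inverse transform $g\mapsto g^\star$ in the $s$-variables and using that $\vp_z$ is convex, lower semicontinuous and finite (so $\vp_z^{\star\star}=\vp_z$). To prove the dual identity I would fix $\sigma$ and show ``$\le$'' and ``$\ge$'' separately. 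Write $u_\sigma(z) := P\{v_s - \langle s,\sigma\rangle\}_{s\in\del K}(z)$, which by the discussion preceding the theorem lies in $\PSH(M,\o)$.

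For ``$\vp^\star(\sigma,\cdot) \ge u_\sigma$'': the function $(s,z)\mapsto u_\sigma(z) + \langle s,\sigma\rangle$ is $\RR^k$-invariant (after complexifying $s$), $\pi_2^\star\o$-psh on $K^\CC\times M$ (it is $\o$-psh in $z$ for fixed $s$ and pluriharmonic, indeed linear, in $s$), bounded, and on $\del K^\CC\times M$ it is bounded above by $v_{s_0}(z)$ by the very definition of $P$. Hence it is a competitor in the envelope \eqref{uUpperEnvEq}, so $\vp(s,z) \ge u_\sigma(z)+\langle s,\sigma\rangle$ for all $s\in K$; rearranging and taking the infimum over $s$ gives $\vp^\star(\sigma,z) \ge u_\sigma(z)$. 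For the reverse inequality ``$\vp^\star(\sigma,\cdot)\le u_\sigma$'': here I would use that $\vp$ is upper semicontinuous — this is the part the theorem also asserts and which I expect to be the technical crux. Granting upper semicontinuity, for each fixed $\sigma$ the slice function $z\mapsto \vp^\star(\sigma,z)$ is an infimum of the $\o$-psh functions $z\mapsto \vp(s,z)-\langle\sigma,s\rangle$ over $s\in K$, hence its usc regularization is $\o$-psh; moreover as $s_0\in\del K$ is approached, $\liminf_s(\vp(s,z)-\langle\sigma,s\rangle) \le v_{s_0}(z)-\langle\sigma,s_0\rangle$ using the boundary condition $\vp_z|_{\del K}=v_z$ and continuity of $v_z$, so this regularized slice is a competitor for $u_\sigma = P\{v_s-\langle s,\sigma\rangle\}$; therefore $\vp^\star(\sigma,z)\le u_\sigma(z)$.

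The crux is proving that $\vp$ is upper semicontinuous on all of $K^\CC\times M$. The inequality $\vp^\star(\sigma,\cdot)\ge u_\sigma$ from the first step is unconditional and does not need usc; taking $g^\star$ of both sides in $s$ gives $\vp(s,z) \ge \sup_\sigma[u_\sigma(z)+\langle s,\sigma\rangle] =: \Phi(s,z)$. The function $\Phi$ is, by construction, an $\RR^k$-invariant $\pi_2^\star\o$-psh function (a supremum of such, with the usc regularization being $\o$-psh in $z$ and convex — hence psh — in $s$), it is bounded, and one checks its boundary values are dominated by $v$ on $\del K^\CC\times M$ using upper semicontinuity of $v_s$ in $z$ and continuity of $v_z$ in $s$; hence $\Phi$ (or its usc regularization) is itself a competitor in \eqref{uUpperEnvEq}, giving $\Phi\ge \vp$. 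Thus $\vp=\Phi$, and since $\Phi$ is a supremum over $\sigma$ of functions each jointly upper semicontinuous in $(s,z)$ — in fact $\Phi$ is convex in $s$ and $\o$-psh in $z$, so its usc regularization is $\pi_2^\star\o$-psh and agrees with $\Phi$ by Choquet — we conclude $\vp = \Phi$ is upper semicontinuous. This simultaneously yields the usc claim and, by the Legendre inversion argument of the first paragraph, the identity \eqref{main_identity}. The one point requiring care is the interchange of ``$\sup_\sigma$'' with usc-regularization in $z$ when verifying $\Phi$ is a legitimate competitor; this is handled by Choquet's lemma together with the fact that the family $\{u_\sigma+\langle s,\sigma\rangle\}_\sigma$ is locally uniformly bounded above (by $\sup v$), so that $\Phi$ is bounded and its usc regularization differs from it only on a pluripolar set, over which equality is recovered by the maximum principle built into the envelope.
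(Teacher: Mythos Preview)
Your overall strategy---prove the Legendre-dual identity $\vp^\star(\sigma,\cdot)=u_\sigma$ by two inequalities and then invert---matches the paper, and your ``$\ge$'' direction is correct and essentially what the paper does. There are, however, two genuine gaps.

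First, in the ``$\vp^\star(\sigma,\cdot)\le u_\sigma$'' direction you assert that since $\vp^\star(\sigma,z)=\inf_{s\in K}[\vp(s,z)-\langle\sigma,s\rangle]$ is an infimum of $\o$-psh functions in $z$, its usc regularization is $\o$-psh. This is false in general: the usc regularization of an infimum of psh functions need not be psh (e.g.\ $\inf_{|a|=1}\Re(az)=-|z|$). The correct tool is precisely Kiselman's minimum principle (Corollary~\ref{KiselmanCor}), which requires the \emph{joint} function on $K^\CC\times M$ to be psh and $\RR^k$-invariant. But to apply it to $\vp$ you already need $\vp$ to be upper semicontinuous, so even after naming Kiselman your argument becomes circular. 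The paper breaks the circularity by applying Kiselman to $\textup{usc}\,\vp$ (which \emph{is} $\pi_2^\star\o$-psh, by Choquet), and then separately shows $\vp^\star=(\textup{usc}\,\vp)^\star$ via Bedford--Taylor: the set $\{\vp<\textup{usc}\,\vp\}$ has zero capacity, hence zero Lebesgue measure, and a Fubini-plus-convexity-in-$s$ argument upgrades this to pointwise equality of the partial Legendre transforms (first off a null set in $M$, then everywhere via the sub-mean-value property).

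Second, your alternative route to upper semicontinuity via $\Phi:=\sup_\sigma[u_\sigma+\langle\,\cdot\,,\sigma\rangle]$ has the inequality reversed. If $\Phi$ (or its usc regularization) is a competitor in \eqref{uUpperEnvEq}, the definition of the upper envelope gives $\Phi\le\vp$, not $\Phi\ge\vp$. Combined with the already-established $\vp\ge\Phi$ this yields nothing new, so $\vp=\Phi$ does not follow. What you actually need is $\vp\le\Phi$, but after Legendre that is exactly $\vp^\star\le u_\sigma$---the hard direction you are trying to prove. Thus this second argument is also circular, and the capacity-theoretic step in the paper (or something equivalent) cannot be avoided.
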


To avoid confusion, we emphasize that
$P\{v_s-\langle s,\sigma\rangle\}_{s\in\del K}$ is {\it not}
the upper envelope of a family of linear function in $\sigma$ (that
would imply it is convex, which is essentially never true).
Instead, the psh-envelope of this family is a global operation
done for each $\sigma$ separately, and it is in fact concave in $\sigma$, as
the second statement in the theorem shows.

We pause to note an important corollary of this result for the special case $K=[0,1]$,
where $K^\CC$ is now the strip $S:=[0,1]\times\RR$, and 
\eqref{HCMAEq} becomes
\beq\label{MabuchiEq}
(\pi_2^\star\o+\i\ddbar \vp)^{n+1}=0, \q \vp|_{\{i\}\times\RR}=v_i,\; i=0,1,
\eeq

\begin{corollary}
\label{MabuchiCor}
Bedford--Taylor solutions of \eqref{MabuchiEq}
with bounded endpoints $v_0,v_1\in L^\infty(M)$, are given by
\begin{equation}\label{Mabuchimain_identity}
\vp(s,z)=P(v_0,v_1-\sigma)^\star_s(z)=\sup_{\sigma\in\RR}[P(v_0,v_1-\sigma)(z)+s\sigma], \ s \in [0,1], \ z\in M.
\end{equation}
\end{corollary}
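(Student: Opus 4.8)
The plan is to derive Corollary \ref{MabuchiCor} as a direct specialization of Theorem \ref{envdual} to the case $K=[0,1]\subset\RR$, with $\del K=\{0,1\}$, $k=1$, and $K^\CC=S$. The only work is to check that the hypotheses of the theorem are met and to translate the indexed notation $\{v_s-\langle s,\sigma\rangle\}_{s\in\del K}$ into the two-function notation $P(v_0,v_1-\sigma)$ appearing in \eqref{Mabuchimain_identity}.

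First I would observe that with $\del K=\{0,1\}$ the family $\{v_s-\langle s,\sigma\rangle\}_{s\in\del K}$ consists of exactly two functions on $M$: for $s=0$ the pairing $\langle 0,\sigma\rangle$ vanishes, giving $v_0$; for $s=1$ it gives $v_1-\sigma$ (here I am writing $\sigma$ for the single real dual variable, and $\langle 1,\sigma\rangle=\sigma$). Hence $P\{v_s-\langle s,\sigma\rangle\}_{s\in\del K}=P(v_0,v_1-\sigma)$ by the definition of the $P$ operator, which for a finite family is just the $\o$-psh envelope of the pointwise minimum. Next I would verify the hypotheses: we are given $v_0,v_1\in L^\infty(M)$, so the combined boundary datum $v$ on $\del K^\CC\times M$ is bounded; the assumption that $v_s\in\PSH(M,\o)$ for $s\in\del K$ amounts to $v_0,v_1\in\PSH(M,\o)$ — if the corollary intends $v_i$ to be merely bounded this should be read as the standing hypothesis, so I would simply invoke it, or else note the statement is understood with $v_i\in\PSH(M,\o)$; and the condition $v_z\in C^0(\del K)$ is automatic because $\del K=\{0,1\}$ is a two-point discrete set, on which every function is continuous.

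With the hypotheses in place, Theorem \ref{envdual} applies and gives, for $s\in[0,1]$ and $z\in M$,
\beq
\vp(s,z)=\big(P\{v_{s'}-\langle s',\sigma\rangle\}_{s'\in\del K}\big)^\star(s,z)
=\sup_{\sigma\in\RR}\big[P\{v_{s'}-\langle s',\sigma\rangle\}_{s'\in\del K}(z)+\langle s,\sigma\rangle\big].
\eeq
Substituting the identification from the previous paragraph and $\langle s,\sigma\rangle=s\sigma$ yields exactly $\vp(s,z)=P(v_0,v_1-\sigma)^\star_s(z)=\sup_{\sigma\in\RR}[P(v_0,v_1-\sigma)(z)+s\sigma]$, which is \eqref{Mabuchimain_identity}. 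Finally, the theorem also asserts $\vp$ is upper semi-continuous, which by the quoted Bedford--Taylor theory (invoked in the discussion after \eqref{uUpperEnvEq}) makes $\vp$ the Bedford--Taylor solution of \eqref{HCMAEq}, i.e.\ of \eqref{MabuchiEq} in this case, completing the identification of $\vp$ with ``the Bedford--Taylor solution of \eqref{MabuchiEq}''.

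There is essentially no hard part: the corollary is a transcription of the theorem. The only point demanding a moment's care is the bookkeeping of the Legendre pairing — making sure that the endpoint $s=0$ contributes $v_0$ (not $v_0-0\cdot\sigma$ written in some shifted way) and that $s=1$ contributes $v_1-\sigma$ with the correct sign, so that the outer $\sup_\sigma[\,\cdot\,+s\sigma]$ matches \eqref{LegTrans2Eq} restricted to $K=[0,1]$. I would also remark, for the reader, that this recovers and makes precise the Ross--Witt-Nystr\"om type formula for geodesics in the space of K\"ahler potentials, the subharmonic/psh envelope $P(v_0,v_1-\sigma)$ playing the role of the ``Legendre-dual profile'' of the geodesic.
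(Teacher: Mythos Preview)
Your derivation of the formula \eqref{Mabuchimain_identity} from Theorem \ref{envdual} is correct and matches the paper: once one specializes to $K=[0,1]$ the family $\{v_s-\langle s,\sigma\rangle\}_{s\in\del K}$ is exactly $\{v_0,\,v_1-\sigma\}$, and \eqref{main_identity} becomes \eqref{Mabuchimain_identity}.

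However, there is a genuine gap in your last paragraph. You claim that upper semi-continuity of $\vp$, via the Bedford--Taylor theory invoked after \eqref{uUpperEnvEq}, makes $\vp$ ``the Bedford--Taylor solution of \eqref{MabuchiEq}''. But the discussion there says only that the usc envelope satisfies the homogeneous \MA equation in the interior; it explicitly warns (and Remark \ref{SolutionRemark} elaborates) that \emph{further conditions are needed} to ensure $\vp|_{\del K^\CC\times M}=v$. Equation \eqref{MabuchiEq} includes the boundary requirement $\vp|_{\{i\}\times\RR}=v_i$, and Theorem \ref{envdual} by itself does not deliver this. A priori the upper envelope could fall strictly below the prescribed data at the endpoints.

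The paper closes this gap by producing a subsolution in the sense of Remark \ref{SolutionRemark}: Berndtsson's barrier
\[
w(s,z)=\max\{v_0(z)-As,\ v_1(z)+A(1-s)\},\qquad A=\max\{\|v_0\|_{L^\infty},\|v_1\|_{L^\infty}\},
\]
is $\RR$-invariant, $\pi_2^\star\o$-psh on $S\times M$, and satisfies $w|_{\{i\}\times\RR}=v_i$. Sandwiching $\vp$ between $w$ and the harmonic extension $\mathcal P[v_z]$ then forces $\vp|_{\{i\}\times\RR}=v_i$. You need to include this (or an equivalent) argument; without it the identification of $\vp$ with a solution of \eqref{MabuchiEq} is not justified.
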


According to Mabuchi, Semmes, and Donaldson \cite{M,S,D1}, sufficiently regular solutions of \eqref{MabuchiEq}
are geodesics in the Mabuchi metric on the space of \K potentials with respect to $\o$.
Thus, Corollary \eqref{MabuchiCor}
implies that Mabuchi's geometry is essentially determined
by the understanding of upper envelopes of the form
$P({v_0},v_1 - \tau)$, for all $v_0,v_1\in\PSH(M,\o)\cap L^\infty(M)$
and for all $\tau \in \Bbb R$. We refer to the sequel \cite{D2014} for
applications of Corollary \ref{MabuchiCor} in this direction, in particular, determining the metric completion of the Mabuchi metric.

\begin{figure}
\begin{center}
\includegraphics[trim=0cm 24cm 0cm 0cm,clip=true,width=4.6in]{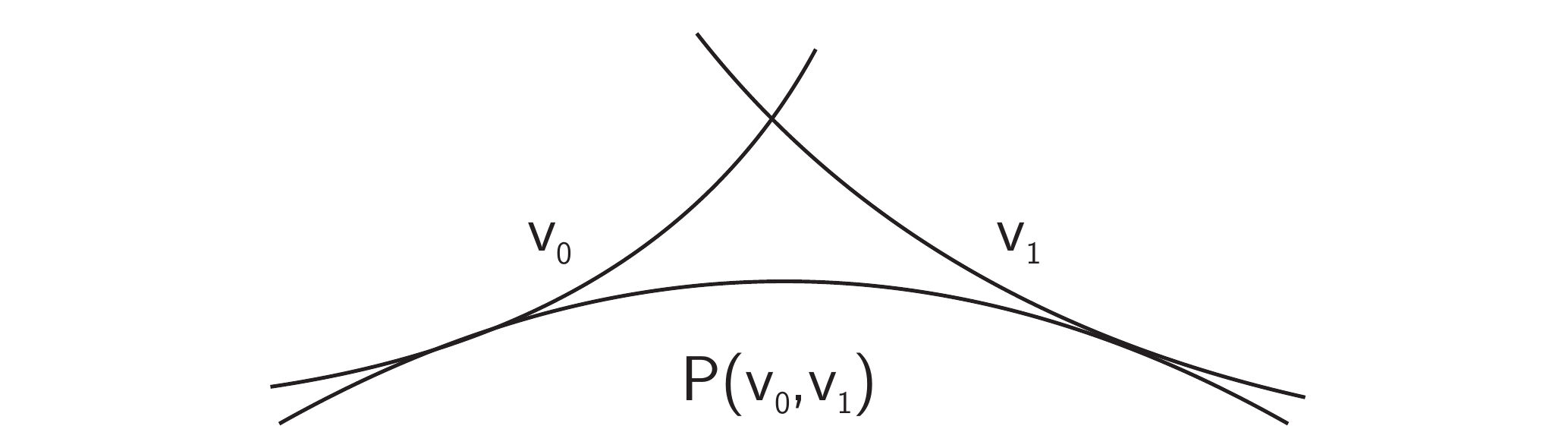}
\caption{The barriers $v_0,v_1$ and the envelope $P(v_0,v_1)$}
\label{figP(v0v1)}
\end{center}
\end{figure}

\subsection{A formula for the solution of the HRMA}
\label{HRMRSubSec}

Theorem \ref{envdual} has a convex analogue in the setting of the HRMA.
The result does not follow directly from the seemingly harder result
for the HCMA.
For concreteness, we only state the analogue of Corollary \ref{MabuchiCor}
in this setting, that arises in the setting of the Mabuchi metric on
a toric manifold $M$.
The reader is referred to \cite[\S2]{RZII} for the
relevant background concerning the HRMA and toric geometry.

For $z$ belonging to the open orbit of the complex torus $(\CC^n)^\star$
(that is dense in $M$), set $x=\Re\log z\in \RR^n$. On the open orbit,
$\o=\i\ddbar\psi_\o$ with $\psi_\o$ $(S^1)^n$-invariant, thus consider
$\psi_\o$ as a function on $\RR^n$.
Then, the HCMA \eqref{HCMAEq} reduces to the HRMA,
\beq\label{HRMAEq}
\h{MA}\psi(s,x)=0, \q \h{on } [0,1]\times\RR^n,
\qquad
\psi_i(x)\equiv\psi(i,x)=\psi_\o(x)+v_i(e^x), \q i\in\{0,1\}.
\eeq
Here, $\h{MA}$ is the unique continuous extension of the
operator $f\mapsto d\frac{\del f}{\del x_1}\wedge\cdots \wedge d\frac{\del f}{\del x_n}$
from $C^2(\RR^n)$ to the cone of convex functions on $\RR^n$.

The following is a convex version of Corollary \ref{MabuchiCor}.

\begin{proposition}\label{HRMAProp}
The solution of \eqref{HRMAEq} with convex endpoints $\psi_0,\psi_1$ is
given by
\begin{equation}\label{toricmain_identity}
\psi=(\min\{\psi_0,\psi_1-\sigma\}^{\star\star})^\star
=\sup_{\sigma\in\RR}[\min\{\psi_0,\psi_1-\sigma\}^{\star\star}+s\sigma].
\end{equation}
\end{proposition}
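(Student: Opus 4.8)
The plan is to follow the proof of Theorem~\ref{envdual} (equivalently Corollary~\ref{MabuchiCor}) almost verbatim, with the convexification operator $(\,\cdot\,)^{\star\star}$ in the $x$-variables playing the role of the $\o$-psh envelope $P$, and the partial Legendre transform in the single base variable $s\in[0,1]$ playing the role of the one in the $K$-variables. First I would recall from the Perron--Bremermann theory for the HRMA (see \cite[\S2]{RZII}) that, for convex endpoints with $\psi_0-\psi_1$ bounded, the solution of \eqref{HRMAEq} is the convex envelope of the boundary data, i.e.
\[
\psi(s,x)=\sup\big\{\,w(s,x)\,:\, w \text{ convex on }[0,1]\times\RR^n,\ w(i,\cdot)\le\psi_i,\ i=0,1\,\big\}.
\]
In particular $\psi$ is jointly convex and finite: it is bounded above by the affine interpolation $(1-s)\psi_0+s\psi_1$ and, using that $\psi_0-\psi_1$ is bounded, below by an affine-in-$s$ barrier; moreover $\psi$ is continuous up to $\{0,1\}\times\RR^n$ with $\psi(i,\cdot)=\psi_i$, so that $\psi(\cdot,x)$, extended by $+\infty$ outside $[0,1]$, is convex, finite and lower semicontinuous on $\RR$.

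The key step is to identify the partial Legendre transform \eqref{LegTrans1Eq} in the $s$-variable:
\[
\psi^\star(\sigma,x)=\inf_{s\in[0,1]}\big[\psi(s,x)-s\sigma\big]=\min\{\psi_0,\psi_1-\sigma\}^{\star\star}(x),\qquad\sigma\in\RR,
\]
the outer $\star\star$ denoting convexification in $x$. For ``$\le$'': for fixed $\sigma$ the function $(s,x)\mapsto\psi(s,x)-s\sigma$ is jointly convex, hence its partial infimum over the interval $s\in[0,1]$ is a finite convex function of $x$; evaluating the infimum at $s=0$ and $s=1$ gives $\psi^\star(\sigma,\cdot)\le\min\{\psi_0,\psi_1-\sigma\}$, so $\psi^\star(\sigma,\cdot)\le\min\{\psi_0,\psi_1-\sigma\}^{\star\star}$ by maximality of the convex envelope. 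For ``$\ge$'': put $m_\sigma:=\min\{\psi_0,\psi_1-\sigma\}^{\star\star}$; then $w(s,x):=m_\sigma(x)+s\sigma$ is convex on $[0,1]\times\RR^n$ with $w(0,\cdot)=m_\sigma\le\psi_0$ and $w(1,\cdot)=m_\sigma+\sigma\le\psi_1$, so $w$ is a competitor in the envelope above; hence $m_\sigma(x)+s\sigma\le\psi(s,x)$ for all $s\in[0,1]$, i.e. $m_\sigma(x)\le\psi^\star(\sigma,x)$.

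Finally, since $\psi(\cdot,x)$ is convex, finite and lower semicontinuous, Fenchel biduality in the $s$-variable (as recorded after \eqref{LegTrans2Eq}) gives $\psi=\psi^{\star\star}$, that is
\[
\psi(s,x)=\sup_{\sigma\in\RR}\big[\psi^\star(\sigma,x)+s\sigma\big]=\sup_{\sigma\in\RR}\big[\min\{\psi_0,\psi_1-\sigma\}^{\star\star}(x)+s\sigma\big]=\big(\min\{\psi_0,\psi_1-\sigma\}^{\star\star}\big)^\star(s,x),
\]
which is \eqref{toricmain_identity}.

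I expect the main obstacle to be the first step: verifying that the convex-envelope formula genuinely solves \eqref{HRMAEq} and attains the endpoints $\psi_i$ \emph{exactly} (not merely $\le\psi_i$), and that $\psi(\cdot,x)$ is lower semicontinuous at $s=0,1$ --- all of which rest on the joint convexity together with affine-in-$s$ upper and lower barriers, and hence on the boundedness of $\psi_0-\psi_1$ inherited from $v_0,v_1\in L^\infty(M)$. Once this regularity bookkeeping is in place, the convexity of $\psi^\star(\sigma,\cdot)$, the competitor construction, and the concluding biduality are all routine.
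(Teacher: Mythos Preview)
Your argument is correct, and in fact the paper itself remarks just before its proof that ``a proof of Proposition~\ref{HRMAProp} \dots\ can be given along very similar lines to the proof of Theorem~\ref{envdual},'' which is exactly what you do. The paper, however, chooses a different starting point: instead of the Perron envelope description of $\psi$, it invokes Semmes' observation that the HRMA is linearized by the Legendre transform in the $\RR^n$ variables, so that $\psi(s,x)=\big((1-s)\psi_0^\star+s\psi_1^\star\big)^\star(x)$, equivalently the infimal convolution \eqref{InfConvEq}. From this explicit formula the paper computes $\psi^\star_x(\sigma)$ directly and matches it with $\min\{\psi_0,\psi_1-\sigma\}^{\star\star}$ by the same two-inequality scheme you use (a competitor for ``$\ge$'', and the convex minimum principle together with the endpoint bounds for ``$\le$'').

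The trade-off: your route is more structural and transports the proof of Theorem~\ref{envdual} verbatim, but it front-loads the regularity bookkeeping you flag (Perron solvability, attainment of endpoints, lower semicontinuity in $s$). The paper's route sidesteps all of that by starting from the closed-form Semmes solution, at the price of being specific to the real setting where such a formula is available; once \eqref{DoubleLegEq}--\eqref{InfConvEq} are in hand, the remaining computation is essentially the same as yours.
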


Here the first (innermost) two Legendre transforms are in the $x$ variables,
while the third (outermost) negative Legendre transform is in the $\sigma$
variable.  
Note that, strictly speaking, this result is not a consequence
of Corollary \ref{MabuchiCor}, since it involves the {\it potentially larger}
convex envelope (the supremum is taken over convex functions that might not
come from toric potentials) and not the psh-envelope;
 rather, 
 Proposition \ref{HRMAProp}
implies Corollary \ref{MabuchiCor} (in this symmetric setting)
since it shows
that the psh-envelope in this setting is attained at a `toric' convex function.

This formula also has an interpretation in terms of Hamilton--Jacobi equations, in the spirit of \cite{RZ}, that we discuss elsewhere.

\subsection{Regularity of rooftop subharmonic-envelopes}

\label{FBPSubSec}

The following  result plays a crucial role in our
proof of the regularity of convex- and psh-envelopes of rooftop obstacles.
It is of independent interest to the study of regularity of solutions
to the free-boundary problem for rooftop obstacles for the Laplacian.
The solution of the aforementioned free-boundary value problem
is, in fact, the subharmonic-envelope of rooftop obstacles. This is a purely local
result, and is stated on the open unit ball $B_1$ in $\RR^n$
(we let $B_R(x_0)$ denote the ball of radius $R$ centered at $x_0\in\RR^n$;
when $x_0=0$ we write $B_R=B_R(0)$). Denote by $\SH(B_1)$
the set of subharmonic functions on $B_1$.

\begin{theorem} \label{P_reg_prop}
Let $b_0,b_1 \in C^{1,1}(B_1)$, and
let
\beq\label{bminEq}
\bmin:=\sup\{ f\in\SH(B_1)\,:\, f\le\min\{b_0,b_1\}\}.
\eeq
Then, there exists a constant $C=C(n,\| b_0\|_{C^{2}(B_1)},\| b_1\|_{C^{2}(B_1)})$ such that
$$
\| \bmin\|_{C^{2}(B_{1/8})} \leq C.
$$
\end{theorem}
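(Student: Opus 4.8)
The plan is to recognize $\bmin$ as the solution of a classical obstacle problem for the Laplacian, to apply the standard $C^{1,1}$ regularity theory on the region where the obstacle $m:=\min\{b_0,b_1\}$ is itself $C^{1,1}$, and to supply a new a priori estimate that takes over near the ridge $R:=\{b_0=b_1\}$, where $m$ is only Lipschitz. \textbf{Step 1 (reformulation and crude bounds).} First I would record the standard facts that $\bmin\in\SH(B_1)$, that $\bmin\le m$, that $\bmin$ is harmonic on the non-coincidence set $\Omega:=\{\bmin<m\}$, and that $\bmin=m$ on the coincidence set $\Lambda:=\{\bmin=m\}$ (Perron/balayage theory). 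Since constants are competitors, $\|\bmin\|_{L^\infty(B_1)}\le\max(\|b_0\|_{L^\infty},\|b_1\|_{L^\infty})$, and comparing $\bmin$ with its translates (using $\operatorname{Lip}(m)\le\max_i\operatorname{Lip}(b_i)$) yields a Lipschitz bound for $\bmin$ on $B_{3/4}$ depending only on $\|b_0\|_{C^1},\|b_1\|_{C^1}$. These are the ``basic regularity tools''.

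\textbf{Step 2 (away from the ridge).} Fix $x_0\in B_{1/8}$ with $b_0(x_0)\ne b_1(x_0)$, say $b_0(x_0)<b_1(x_0)$. Then $m\equiv b_0$ on $B_\rho(x_0)$ with $\rho\ge c\,|b_0(x_0)-b_1(x_0)|/\|b_0-b_1\|_{C^1}$. A short gluing lemma identifies $\bmin$ on $B_\rho(x_0)$ with the solution of the obstacle problem there with the $C^{1,1}$ obstacle $b_0$ and boundary data $\bmin|_{\partial B_\rho(x_0)}$ (one inclusion is that $\bmin$ is a subharmonic competitor; the other, that the local solution, suitably extended, is a global subharmonic competitor $\le m$). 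The classical interior $C^{1,1}$ estimate for obstacle problems (Frehse, Caffarelli) then bounds $|D^2\bmin(x_0)|$ in terms of $n$, $\|b_0\|_{C^{1,1}}$, $\|\bmin\|_{L^\infty}$ and $\rho^{-2}\|\bmin-b_0\|_{L^\infty(B_\rho(x_0))}$; this is uniform as long as $x_0$ keeps a definite distance from $R$, and it degrades as $x_0\to R$, so it must be complemented there.

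\textbf{Step 3 (the new estimate near the ridge).} Writing $m=b_1-(b_1-b_0)^+$, near a ridge point $y\in R$ the obstacle is a smooth function minus the positive part of a function that vanishes along $R$ with gradient $\nabla(b_1-b_0)(y)$; thus $m$ has a downward wedge across $R$ and its distributional Laplacian carries a nonpositive singular part, of size $\simeq|\nabla(b_0-b_1)(y)|$, concentrated on $R$. Since $\bmin$ is subharmonic it cannot equal $m$ on a set carrying this negative singular mass, so $\Lambda$ must avoid a neighborhood of the ``sharp'' part of $R$. Quantitatively, comparing $\bmin$ on a small ball $B_r(y)$ with the harmonic function having boundary values $m|_{\partial B_r(y)}$ — or, better, with an explicit barrier built from a truncated fundamental solution matched to the wedge of $m$ — I would show $m-\bmin\gtrsim r\,|\nabla(b_0-b_1)(y)|$ on an inner region; equivalently, a ball of radius $\gtrsim|\nabla(b_0-b_1)(y)|/\|b_0-b_1\|_{C^2}$ fits between the graphs of $\bmin$ and $m$ over points near $R$. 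This is the a priori estimate on the size of a ball lying between the rooftop and the envelope: it places a ball of that radius near $x_0$ inside $\Omega$, where $\bmin$ is harmonic, and interior Hessian estimates for harmonic functions bound $|D^2\bmin(x_0)|$ by $\|\bmin\|_{L^\infty}$ times the reciprocal square of that radius.

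\textbf{Step 4 (patching), and the main obstacle.} It remains to choose the threshold separating the Step 2 regime (a positive lower bound for the distance to $R$ and for $|\nabla(b_0-b_1)|$ at the nearest ridge point) from the Step 3 regime so that the two estimates overlap, and to treat the degenerate case in which $|\nabla(b_0-b_1)|$ is itself small along the nearby ridge — there $m$ is close, in a suitable sense, to a $C^{1,1}$ function (at a ridge point with $\nabla b_0=\nabla b_1$ it is pinched between paraboloids), and a perturbation of Step 2 applies. A finite covering of $\overline{B_{1/8}}$ then produces the constant $C(n,\|b_0\|_{C^2},\|b_1\|_{C^2})$. I expect the crux to be Step 3 together with this patching: upgrading the soft statement ``a subharmonic function cannot touch the downward roof of $m$'' to a scale-invariant lower bound on the inscribed ball that survives the degeneration $\nabla b_0\to\nabla b_1$ along $R$, and reconciling it with the classical estimate, whose constant blows up near $R$. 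That some such dichotomy is unavoidable — and collapses for three or more obstacles, where the roof singularities can be ``coplanar'' — is precisely the phenomenon behind the sharp $C^{1,\frac12}$ examples of Petrosyan--To \cite{PetrosyanTo}.
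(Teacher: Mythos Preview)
Your core new ingredient --- the a priori ball between the envelope and the rooftop near the ridge $R=\{b_0=b_1\}$ --- is exactly the paper's Proposition~\ref{cushintheorem}, and your heuristic for it (the downward wedge in $m$ forces a gap) is the right one. The structural difference, and the place where your outline has a genuine gap, is in how that ball is used.

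You deploy the ball estimate directly to obtain Hessian bounds: near a ridge point $y$ you find a ball of radius $r\simeq|\nabla b_{10}(y)|$ inside the non-coincidence set, and then invoke interior estimates for harmonic functions to get $|D^2\benv(x_0)|\lesssim \|\benv\|_{L^\infty}/r^2\simeq |\nabla b_{10}(y)|^{-2}$. This blows up precisely in the degenerate regime $\nabla b_0\to\nabla b_1$ along $R$, and your Step~4 patching (``$m$ is pinched between paraboloids, so a perturbation of Step~2 applies'') does not close: pointwise pinching of $m$ between paraboloids at a single ridge point is not the $C^{1,1}$ obstacle hypothesis the classical theory needs on an open set, and you give no mechanism for reconciling the two blow-up rates ($\rho^{-2}$ from Step~2 and $|\nabla b_{10}|^{-2}$ from Step~3) into a uniform bound.

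The paper avoids this dichotomy entirely. It uses the ball estimate not for Hessian bounds but \emph{inside} the proof of a uniform quadratic growth estimate at every contact point $x_0\in\Lambda$ (Proposition~\ref{QuadraticProp}). Following Caffarelli, one writes $s=\benv-\ell-Mr^2$ on $B_r(x_0)$ (with $\ell$ the tangent plane of $b_i$ at $x_0$), splits $s=s_1+s_2$ into harmonic plus zero-boundary parts, and must bound $\inf s_2$ at its minimizer $x_1\in\Lambda$. The only new case is when $x_1$ lies on the ``other'' sheet, say $\benv(x_1)=b_1(x_1)$ while $\benv(x_0)=b_0(x_0)$; then there is a ridge point $\tilde x$ on the segment $[x_0,x_1]$, and the dangerous term in the expansion is $|\nabla b_{10}(\tilde x)|\,|x_1-x_0|$. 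Here the ball estimate is used \emph{in reverse}: since $x_0,x_1\in\Lambda$ both lie outside $B_{C|\nabla b_{10}(\tilde x)|}(\tilde x)$, one gets $|\nabla b_{10}(\tilde x)|\lesssim |x_1-x_0|\le r$, and the dangerous term is $\lesssim r^2$. The degenerate case is thus absorbed automatically --- small $|\nabla b_{10}|$ makes the bad term small, not the good ball small. Once quadratic growth holds uniformly on $\Lambda$, the passage to $C^{1,1}$ is the standard Caffarelli argument (Proposition~\ref{InteriorProp}), with no patching needed.
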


\subsection{Regularity of the convex-envelope or psh-envelope of a family of functions}
\label{CVXPSHRegSubSec}

Given an upper semi-continuous family  $\{ f_a\}_{a \in \mathcal A}$
with additional regularity properties, one would like to study how
much regularity is preserved by the envelope 
$P\{ f_a\}_{a \in \mathcal A}$. 
%
%
Motivated by Corollary \ref{MabuchiCor} and Proposition \ref{HRMAProp}, we are led to study
the regularity of upper envelopes of the type $P(v_0,v_1)$.
Here, we concentrate on the case when the
{\it barriers} (sometimes also called {\it obstacles})
$v_0$ and $v_1$ are rather regular.
The sequel \cite{D2014} treats the case when $v_0$ or $v_1$
are rather irregular in the psh setting. Already in the case of smooth convex functions,
the convexification is not $C^2$ in general. Thus, the following
results gives conditions that guarantee essentially optimal regularity.
A novelty of our approach, perhaps, is that both the convex- and the psh-envelopes are handled simultaneously.

To state the results, we define the Banach space
\beq\label{CooEq}
C^{1\bar1}(M):=\{f\in L^\infty(M)\,:\, \Delta_\o f\in L^\infty(M)\},
\eeq
with associated Banach norm
\beq\label{CooNormEq}
||f||_{C^{1\bar1}}:=||f||_{L^\infty(M)}+||\Delta_\o f||_{L^\infty(M)}.
\eeq
If $Cf\in\PSH(M,\o)$ for some $C>0$, then $f\in C^{1\bar1}(M)$ if and only if $\i\ddbar f$
is a current with bounded coefficients.
We also define, as usual, $C^{1,1}(M)$ to be the
Banach space of functions on $M$ with finite $C^2(M)$ norm.
One has $C^2(M)\subset C^{1,1}(M)\subset C^{1\bar 1}(M)$.

\begin{theorem}
\label{PRegThm}
One has the following estimates:\hfill\break
(i)
$\|P(v)\|_{C^{1}}\le C(M,\o,\| v\|_{C^{1}})$.
\hfill\break
(ii) 
$\|P(v_0,v_1)\|_{C^{1\bar1}}
\leq C(M,\o,\| v_0\|_{C^{1\bar1}}
,\| v_1\|_{C^{1\bar1}}).
$
\hfill\break
(iii)
Suppose $[\omega_0] \in H^2(M,\Bbb Z)$. Then,
$\|P(v_0,v_1)\|_{C^{2}} \leq C(M,\o,\| v_0\|_{C^{2}},\| v_1\|_{C^{2}}).$
\end{theorem}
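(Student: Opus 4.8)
The plan is to reduce all three statements to the subharmonic-envelope regularity of Theorem \ref{P_reg_prop}, using the observation that the psh-envelope always lies below the subharmonic-envelope of the same obstacle, together with standard comparison/barrier arguments.

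For (i), the $C^1$ bound for $P(v)$ with a single barrier $v$, I would argue as follows. First, $P(v)$ lies between $v - \|v\|_{C^1}\cdot(\text{something})$ and $v$; more precisely, since $\o$ is a fixed \K form, there is $C_0 = C_0(M,\o)$ such that $C_0 \rho$ is strictly $\o$-psh for a fixed smooth $\rho$, so that $v - \lambda$ is a competitor after subtracting a suitable multiple of the distance to a maximum point — the point being that $P(v)$ touches $v$ at its maximum and, by an upper-barrier argument using $\o$-psh test functions of the form $v(x_0) + \langle \nabla v(x_0), \cdot\rangle + C|\cdot|^2$, one controls $|\nabla P(v)|$ pointwise by $\|v\|_{C^1}$ plus a geometric constant. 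The key fact is that at any point where $P(v) < v$, the envelope is harmonic (indeed maximally $\o$-psh), so gradient bounds propagate from the contact set; this is the classical Berman–Demailly-type argument \cite{Be,BD}, and I would simply cite or reproduce it.

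For (ii), I would localize. On a coordinate ball $B_1 \subset M$, write $\o = \i\ddbar g$ with $g$ smooth, and set $b_i := v_i + g$. Then an $\o$-psh function $h$ with $h \le \min\{v_0,v_1\}$ corresponds to a psh function $h + g \le \min\{b_0,b_1\}$, hence $P(v_0,v_1) + g \le \bmin$ where $\bmin$ is the subharmonic-envelope of $\min\{b_0,b_1\}$ from Theorem \ref{P_reg_prop}. This gives the \emph{upper} bound $\Delta_\o P(v_0,v_1) \le C$ from Theorem \ref{P_reg_prop} after noting $v_i \in C^{1\bar 1}$ can be approximated by $C^2$ barriers (mollify $b_i$; the constant in Theorem \ref{P_reg_prop} depends only on $\|b_i\|_{C^2}$, which one must upgrade to depend only on $\|v_i\|_{C^{1\bar 1}}$ — see below for the obstacle). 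The \emph{lower} bound $\Delta_\o P(v_0,v_1) \ge -C$ (equivalently, $\i\ddbar P(v_0,v_1) \ge -C\o$) is automatic from $P(v_0,v_1) \in \PSH(M,\o)$. Combining, $\Delta_\o P(v_0,v_1) \in L^\infty$ with the stated bound; together with (i)-type $L^\infty$ control this gives the $C^{1\bar 1}$ estimate.

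For (iii), with $[\omega_0]\in H^2(M,\Bbb Z)$ one has a positive line bundle, and I would use Berman's approximation: $P(v_0,v_1)$ is a decreasing limit of Bergman-kernel-type envelopes, or alternatively one upgrades the $C^{1\bar 1}$ bound to $C^2$ using that where $P(v_0,v_1)$ is strictly below \emph{both} barriers it is (locally) the solution of a homogeneous \MA equation — but more efficiently, one uses Theorem \ref{P_reg_prop} directly, whose conclusion is a genuine $C^2$ bound, not merely $C^{1\bar1}$, on $\bmin$; the integrality is needed only to run Berman's \cite{Be} reduction globally (patching the local subharmonic-envelope estimates into a global psh-envelope estimate) in the same way the $C^{1\bar1}$ case did not require it.

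The main obstacle, I expect, is the dependence of the constant. Theorem \ref{P_reg_prop} gives a constant depending on $\|b_i\|_{C^2}$, whereas statement (ii) only assumes $\|v_i\|_{C^{1\bar 1}}$ (i.e.\ bounded Laplacian, not bounded full Hessian). One must therefore check that the a priori estimate of Theorem \ref{P_reg_prop} — in particular the "size of balls lying above the non-contact set" estimate advertised in the abstract — survives mollification with a bound depending only on $\|b_i\|_{L^\infty} + \|\Delta b_i\|_{L^\infty}$. This is plausible because the obstacle problem for the Laplacian is governed by one-sided Laplacian bounds (the relevant barrier is $b_i$ minus a multiple of $|x-x_0|^2$, whose Laplacian is $\Delta b_i - 2n$), so the full Hessian of $b_i$ should not enter; but verifying this carefully, and similarly checking that passing to the decreasing limit of mollified envelopes preserves the $C^2$ bound (via stability of the envelope under uniform convergence of obstacles, plus weak-$*$ compactness of bounded Laplacians), is where the real work lies.
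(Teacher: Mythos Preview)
Your proposal for (ii)--(iii) contains a genuine gap. You correctly observe that on a coordinate ball $P(v_0,v_1)+g$ lies below the local subharmonic envelope $\benv$, but the inference ``this gives the upper bound $\Delta_\omega P(v_0,v_1)\le C$'' is false: an inequality $u\le w$ between functions does not bound $\Delta u$. Off the contact set $P(v_0,v_1)$ satisfies the homogeneous complex \MA equation, not the Laplace equation, so no interior Laplacian bound is available there, and a subharmonic barrier sitting above tells you nothing about $\Delta P(v_0,v_1)$. The paper's key step, which you are missing, is to use the local subharmonic envelopes not as upper barriers for a direct Laplacian estimate, but to build a \emph{single global $C^{1,1}$ obstacle} $b:=\sum_j\rho_j(h_j-w_j)$ (via a partition of unity over coordinate balls, with $h_j$ the local subharmonic envelope and $w_j$ the local \K potential) sandwiched between $P(v_0,v_1)$ and $\min\{v_0,v_1\}$. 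The sandwich forces $P(b)=P(v_0,v_1)$, and now the Berman--Demailly single-obstacle result (Theorem \ref{BDreg}) applies directly to $b$, yielding both (ii) and (iii). The $C^{1\bar1}$ dependence in (ii) follows because $\Delta h_j$ equals $\Delta(v_i+w_j)$ a.e.\ on the coincidence set and vanishes off it, so your mollification concern is sidestepped.

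For (i), your sketch is also off: $P(v)$ is not harmonic off the contact set but only maximally $\omega$-psh (the degenerate complex \MA equation), so gradient bounds do not simply ``propagate'' from the contact set. The paper instead uses Berman's zero-temperature approximation: solve $(\omega+\i\ddbar u_\beta)^n=e^{\beta(u_\beta-v)}\omega^n$, apply \Blocki's a priori gradient estimate to bound $\|u_\beta\|_{C^1}$ independently of $\beta$, and pass to the limit $u_\beta\to P(v)$.
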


Our convention here and below is that the constants $C$ on the right
hand side of the estimates just stated may equal to $\infty$
only if the corresponding norms of $v$ or $v_i$ are infinite.

An analogous result can be stated for convex rooftop envelopes. For simplicity, we only state
a representative result in the toric setting of Proposition \ref{HRMAProp}.

\begin{corollary}
\label{PRealRegThm}
Let $\psi_0,\psi_1$ 
be as in Proposition \ref{HRMAProp}. Then,
$$
\|\min\{\psi_0,\psi_1\}^{\star\star}\|_{C^{2}} \leq C(M,\o,\| \psi_0\|_{C^{2}},\| \psi_1\|_{C^{2}}).
$$
\end{corollary}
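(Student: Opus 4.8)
The plan is to deduce Corollary \ref{PRealRegThm} from Theorem \ref{P_reg_prop} in the same way that Theorem \ref{PRegThm}(iii) is (presumably) deduced from it, exploiting that the subharmonic-envelope always dominates the convex-envelope. First I would observe that the statement is local on the open orbit $\RR^n$: since $\psi_0,\psi_1$ are convex and equal to $\psi_\o$ plus an $(S^1)^n$-invariant smooth function pulled back from $M$, one has uniform control of $\|\psi_i\|_{C^2}$ on $\RR^n$ coming from $\|\psi_i\|_{C^2(M)}$ together with the fixed potential $\psi_\o$, and one reduces to proving an interior $C^2$ estimate for $\min\{\psi_0,\psi_1\}^{\star\star}$ on balls $B_{1/8}(x_0)$ with a constant independent of $x_0$. (Boundedness of $\min\{\psi_0,\psi_1\}^{\star\star}$ relative to $\psi_\o$ is automatic since it lies between $\psi_\o+\min\{v_0,v_1\}\circ e^x$ and the convex envelope thereof.)

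Next I would set up the comparison. On a fixed ball $B_1(x_0)$, write $b_i:=\psi_i$, so $b_0,b_1\in C^{1,1}(B_1(x_0))$ with $\|b_i\|_{C^2(B_1(x_0))}$ bounded by a constant depending only on $M,\o,\|\psi_i\|_{C^2(M)}$. Let $\bmin$ be the subharmonic-envelope of $\min\{b_0,b_1\}$ on $B_1(x_0)$ as in \eqref{bminEq}. Since any convex function is subharmonic, the global convex envelope $u:=\min\{\psi_0,\psi_1\}^{\star\star}$, restricted to $B_1(x_0)$, is a competitor in \eqref{bminEq}, hence $u\le\bmin$ on $B_1(x_0)$; conversely $u\le\min\{b_0,b_1\}$ everywhere. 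The point is that $u$ coincides with $\min\{b_0,b_1\}$ on the contact set and is ``flat'' (affine along segments) off it, so on each ball $B_{1/8}(x_0)$ the Hessian bound for $u$ follows from the Hessian bound for $\bmin$ given by Theorem \ref{P_reg_prop}: where $u=\bmin$ there is nothing to prove, and where $u<\bmin$ one uses that $u$ is locally affine (its graph contains a supporting segment joining two contact points, on which $D^2u=0$) — actually the cleanest route is that at any point $x$ one has $0\le D^2u(x)\le D^2\bmin(x)$ is false in general, so instead I would argue that $u$ being convex, $D^2u\ge0$, and the upper bound on $D^2u$ is what needs the envelope theory; I would obtain it from the identity $u=\bmin$ on a neighborhood of the non-contact set's closure is wrong too — rather, I would use that the convex envelope of a $C^{1,1}$ rooftop $\min\{b_0,b_1\}$ is itself $C^{1,1}$ with the one-sided Hessian estimate $D^2u\le\max_i D^2b_i$ on the contact set and $D^2u=0$ off it, and that the transition is exactly at the free boundary controlled by Theorem \ref{P_reg_prop}.

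More precisely, the key step is: the free boundary $\partial\{u<\min\{b_0,b_1\}\}$ for the convex obstacle problem is contained in, and controlled by, the free boundary for the subharmonic obstacle problem, because $u\le\bmin\le\min\{b_0,b_1\}$ forces the non-contact set of $u$ to contain that of $\bmin$ — wait, it is the reverse inclusion we want, so here I would invoke the a priori estimate of Theorem \ref{P_reg_prop} directly on $\bmin$, then note $u$ and $\bmin$ have the same obstacle and $u\le\bmin$, so the contact set of $\bmin$ is a subset of that of $u$; on the contact set of $u$, $u=\min\{b_0,b_1\}$ has Hessian bounded by $\max_i\|b_i\|_{C^2}$ in the support sense, and on the non-contact set of $u$, $u$ is affine, so $D^2u=0$ there. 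Patching these with the $C^{1,1}$ estimate across the free boundary (which is where one genuinely needs that $\bmin\in C^{1,1}$, since it interpolates) yields $\|u\|_{C^2(B_{1/8}(x_0))}\le C$. Finally, letting $x_0$ range over $\RR^n$ and using the $(S^1)^n$-invariance to transfer the estimate to $M$, one concludes $\|\min\{\psi_0,\psi_1\}^{\star\star}\|_{C^2(M)}\le C(M,\o,\|\psi_0\|_{C^2},\|\psi_1\|_{C^2})$.

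I expect the main obstacle to be the patching across the free boundary: making rigorous that a convex function which equals a $C^{1,1}$ function on its contact set, is affine off it, and whose free boundary is squeezed between the obstacle and the $C^{1,1}$ function $\bmin$, is globally $C^{1,1}$ with the stated estimate. The cleanest way to handle this is probably to avoid the convex obstacle problem altogether and argue purely by comparison: show $\psi_\o + (\text{something})\le u\le\bmin$ with matching $C^{1,1}$ bounds on the envelopes $\bmin$ from Theorem \ref{P_reg_prop}, and note that a function trapped between two functions that agree to second order on the contact set, are affine off it, and are $C^{1,1}$, is itself $C^{1,1}$ — but the honest version of this requires the structure theory of the convex envelope (Griewank--Rabier, Kirchheim--Kristensen, cited in the introduction) to identify the non-contact set as a union of segments and to get the Hessian to vanish there, which is exactly the ingredient I would cite rather than reprove.
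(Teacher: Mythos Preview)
Your approach tries to prove the corollary directly on $\RR^n$ by comparing the convex envelope $u=\min\{\psi_0,\psi_1\}^{\star\star}$ with the local subharmonic envelope $\bmin$, but the argument as written does not close. The inequality $u\le\bmin\le\min\{b_0,b_1\}$ by itself gives no second-order control on $u$: being trapped below a $C^{1,1}$ function is useless without a matching lower barrier. You recognize this and try several patches, but each has a problem. In particular, the claim that ``on the non-contact set of $u$, $u$ is affine, so $D^2u=0$ there'' is false for $n>1$: the convex envelope is ruled by line segments on its non-contact set, so its Hessian is rank-deficient there, but it is not affine in all directions and $D^2u$ need not vanish. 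The structure results you cite (Griewank--Rabier, Kirchheim--Kristensen) concern the convex envelope of a \emph{single $C^{1,1}$ obstacle}, not a Lipschitz rooftop, so they cannot be applied directly to $\min\{b_0,b_1\}$.

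The paper's proof is a one-line reduction that sidesteps all of this: by Proposition~\ref{HRMAProp} the convex rooftop envelope, viewed on the toric manifold, solves the HRMA and hence the HCMA, so it coincides with the psh-envelope $P(v_0,v_1)$ on $M$; Theorem~\ref{PRegThm} then gives the $C^2$ bound directly. If you want to stay on $\RR^n$ and salvage your approach, the correct analogue of the paper's proof of Theorem~\ref{PRegThm} is this: use the local subharmonic envelopes from Theorem~\ref{P_reg_prop} and a partition of unity to build a \emph{global} $C^{1,1}$ function $b$ with $u\le b\le\min\{\psi_0,\psi_1\}$ and an a priori $C^2$ bound on $b$; then $u=b^{\star\star}$ (since $u$ is convex and $\le b$, while $b^{\star\star}\le\min\{\psi_0,\psi_1\}^{\star\star}=u$), and \emph{now} the single-obstacle convex-envelope regularity applies to $b$. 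This is the step your proposal circles around but never articulates.
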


By repeated application of the formula $P(v_0,v_1,\ldots,v_k)=P(v_0,P(v_1,\ldots,v_k))$,
the results just stated hold also for
envelopes of the type $P(v_0,\ldots,v_k)$.

In general, the convex- or subharonic-envelope of a Lipschitz function
will be no better than Lipschitz, as shown by Kirchheim--Kristensen \cite{KirchheimKristensen}, 
and by Caffarelli \cite[Theorem 2]{Caf}, respectively.
Theorem \ref{PRegThm} (i) is the analogous fact for psh-envelopes.
The psh-envelope
of a family of functions, e.g., $P(v_0,v_1)$ is
of course the psh-envelope of the single function $\min\{v_0,v_1\}$
that is in general only Lipschitz. Thus, the point of Theorem
\ref{PRegThm} (ii)--(iii) is that for special Lipschitz functions of
the form $\min\{v_0,\ldots,v_k\}$ that we refer to as {\it
rooftop functions} (see Figure \ref{figP(v0v1)}) the psh-envelope
has a regularizing effect, roughly gaining a derivative.

The proof of Theorem \ref{PRegThm} uses 
basic techniques from the theory of free boundary problems for the Laplacian,
together with results of Berman \cite{Be} and Berman-Demailly \cite{BD}
 on upper envelopes of psh functions. Part (i) is, in fact, a simple
 consequence of the Lipschitz estimate of \Blocki\ \cite{BlockiGrad} 
in conjunction with the ``zero temprature" approximation procedure of Berman
\cite{BeApprox}. The bulk of the proof is thus devoted to parts (ii)--(iii). 
The key step is to show that there exists a $C^{1,1}$ function $b$
(a `barrier')
{\it along with an a priori estimate} depending only on the respective
norms of the $v_i$, such that $b$ lies below $\min\{v_0,v_1\}$ but
above $P(v_0,v_1)$. The barrier we construct is actually obtained
by first constructing local {\it subharmonic-envelopes}
of $v_0$ and $v_1$ on coordinate charts. This construction
is mostly based on well-known techniques from the study of the
free boundary Laplace equation, see, e.g.,
\cite{Caf,CafSalsa,PetrosyanSU},
but with one essential new ingredient, that we now describe.
For a general rooftop obstacle (that is, not necessarily of the form
$\min\{v_0,v_1\}$) Petrosyan--To \cite{PetrosyanTo} show that
the subharmonic-envelope is $C^{1,\frac12}$ 
and no better.
Yet, also in the literature on subharmonic-envelopes we were not able
to find the regularization statement for rooftop obstacles
of the form  $\min\{v_0,v_1\}$
although it might very well be known to experts.
Thus,
the main new technical ingredient is the estimate of Proposition
\ref{cushintheorem}
that guarantees that around each point in the set $\{v_0=v_1\}$
there exists a ball of a priori estimable size that stays
away from the {\it contact set}, i.e., the set where
the local subharmonic envelope equals the barrier $\min\{v_0,v_1\}$.
Given this estimate, the standard quadratic growth estimate
carries over to our setting, and one obtains a priori estimates on $b$.
Then, since the subharmonic-envelope necessarily majorizes the psh-envelope,
we get $P(b)=P(v_0,v_1)$, to which one may apply Berman--Demailly's results.

A regularity result of a similar nature has been recently proved by Ross--Witt-Nystrom \cite{RWN1}
in a different setting.
Namely, they study regularity of envelopes of the type $P_{[\phi]}(\psi)= \textup{usc} \big( \sup_{c >0} P(\phi+c,\psi)\big)$, 
where $\psi \in C^{1\bar 1}(M)$, $\phi \in \PSH(M,\o)$ is exponentially H\"older continuous and $M$ is polarized.
Also, upon completing this article, we were informed by Berman that the
technique of \cite{BD} can be extended to prove Theorem \ref{PRegThm}(iii) \cite{Berman-personal}.
Perhaps the novel point in our approach, compared to such an extension, is
that it also gives, in passing, a useful result concerning the obstacle problem
for the Laplacian, and thus proves the regularity of the subharmonic-, convex-, and psh-envelopes, all at once.

\subsection{Applications to regularity of Bremermann upper envelopes}

A combination of Theorem \ref{envdual} and Theorem \ref{PRegThm} (i) gives
fiberwise Lipschitz regularity of the Bremermann upper envelope $\vp$ \eqref{uUpperEnvEq}
associated to fiberwise Lipschitz boundary data. This provides an instance when one can draw conclusions about the regularity of $\vp$ by studying first the regularity of its partial Legendre transform.

\begin{corollary}\label{LipschitzHCMACor} In the setting of Theorem \ref{envdual}, 
the envelope $\vp$ satisfies
$$
\|\varphi(s,\cdot) \|_{C^1} \leq C(M,\o,\sup_{s \in \partial K}\|v(s,\cdot)\|_{C^1}),
\q \h{for any $s \in K$}.
$$ 
In other words, if the boundary data is fiberwise Lipschitz, so is the envelope,
and with a uniform estimate.
\end{corollary}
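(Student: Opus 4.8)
The plan is to combine the partial-Legendre-transform formula of Theorem~\ref{envdual} with the fiberwise regularity of the $M$-direction envelope, using only the elementary fact that the Legendre transform cannot worsen fiberwise Lipschitz bounds in the $M$-directions.

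\medskip

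\textbf{Step 1: Reduce to estimating the envelope $P\{\cdots\}$.}
By Theorem~\ref{envdual}, for $h\in K$ and $z\in M$ we have
\[
\vp(h,z)=\sup_{\sigma\in\RR^k}\Big[\,P\{v_s-\langle s,\sigma\rangle\}_{s\in\del K}(z)+\langle h,\sigma\rangle\,\Big].
\]
Fix $h\in K$. The map $z\mapsto\vp(h,z)$ is a supremum over $\sigma$ of the functions $z\mapsto g_\sigma(z):=P\{v_s-\langle s,\sigma\rangle\}_{s\in\del K}(z)+\langle h,\sigma\rangle$. A supremum of $L$-Lipschitz functions is $L$-Lipschitz (once it is finite, which it is since $\vp$ is bounded by the boundedness of $v$). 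So it suffices to produce a Lipschitz constant for $g_\sigma$, i.e.\ for $P\{v_s-\langle s,\sigma\rangle\}_{s\in\del K}$, that is uniform in $\sigma$.

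\medskip

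\textbf{Step 2: Apply Theorem~\ref{PRegThm}(i) to each slice.}
The crucial observation is that the obstacle $\inf_{s\in\del K}\{v_s(z)-\langle s,\sigma\rangle\}$ has the same Lipschitz modulus in $z$ as $\inf_{s\in\del K}v_s$, since the term $-\langle s,\sigma\rangle$ is a constant in $z$ for each fixed $s$: an infimum of functions each of which is $L$-Lipschitz in $z$ (with $L=\sup_{s\in\del K}\|v_s\|_{C^1}$) is again $L$-Lipschitz in $z$, uniformly over all shifts $-\langle s,\sigma\rangle$ and hence over all $\sigma$. Now $P\{v_s-\langle s,\sigma\rangle\}_{s\in\del K}=P\big(\inf_{s\in\del K}\{v_s-\langle s,\sigma\rangle\}\big)$ is the psh-envelope of a single Lipschitz function on $M$ with a Lipschitz constant bounded by $\sup_{s\in\del K}\|v(s,\cdot)\|_{C^1}$, independently of $\sigma$. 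By Theorem~\ref{PRegThm}(i),
\[
\big\|P\{v_s-\langle s,\sigma\rangle\}_{s\in\del K}\big\|_{C^1(M)}\le C\big(M,\o,\textstyle\sup_{s\in\del K}\|v(s,\cdot)\|_{C^1}\big),
\]
with the right-hand side independent of $\sigma$. (One should note that Theorem~\ref{PRegThm}(i) as stated presumes a Lipschitz obstacle, which is exactly the hypothesis here; the $C^1$ bound on the obstacle is the relevant input, and only its Lipschitz seminorm enters.)

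\medskip

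\textbf{Step 3: Conclude.}
Combining Steps~1 and 2, $z\mapsto\vp(h,z)$ is a supremum of functions that are uniformly $C$-Lipschitz on $M$ (the added term $\langle h,\sigma\rangle$ is constant in $z$), hence is itself $C$-Lipschitz, with $C=C(M,\o,\sup_{s\in\del K}\|v(s,\cdot)\|_{C^1})$ independent of $h\in K$. This is exactly the claimed estimate $\|\vp(s,\cdot)\|_{C^1}\le C(M,\o,\sup_{s\in\del K}\|v(s,\cdot)\|_{C^1})$.

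\medskip

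\textbf{Main obstacle.}
The only delicate point is the interchange of ``supremum over $\sigma$'' with ``Lipschitz in $z$'': one needs the envelope $\vp(h,\cdot)$ to be finite (guaranteed by the boundedness of $v$ and Theorem~\ref{envdual}) and one needs the Lipschitz constant from Theorem~\ref{PRegThm}(i) to be genuinely uniform in $\sigma$, which follows because that constant depends on the obstacle only through its Lipschitz seminorm and the latter is $\sigma$-independent after the shift argument. A secondary bookkeeping point is that $\sup_{s\in\del K}\|v(s,\cdot)\|_{C^1}$ controls both the $L^\infty$ and the Lipschitz parts of the obstacles, so all constants can be absorbed into the stated form; no new analytic input beyond Theorems~\ref{envdual} and \ref{PRegThm}(i) is required.
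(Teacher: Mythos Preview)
Your argument is correct and follows essentially the same route as the paper: apply Theorem~\ref{envdual}, observe that the fiberwise Lipschitz seminorm of the obstacle $\inf_{s\in\del K}(v_s-\langle s,\sigma\rangle)$ is $\sigma$-independent, invoke Theorem~\ref{PRegThm}(i), and then pass the Lipschitz bound through the supremum over $\sigma$. The paper packages the sup/inf Lipschitz stability as a separate Lemma (an Arzel\`a--Ascoli statement) and first records the elementary $C^0$ bound $\|\vp\|_{C^0}\le\|v\|_{C^0}$, but these are cosmetic differences; you have correctly flagged the one genuine subtlety, namely that the constant in Theorem~\ref{PRegThm}(i) depends on the obstacle only through its Lipschitz seminorm (by translation invariance of $P$), which is what makes the bound uniform in $\sigma$.
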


The novelty of this result is that it proves regularity of the envelope $\vp$,
whether or not it solves the HCMA. We are not aware of any such results in the literature.
At the same time, when $\vp$ does solve the HCMA then other techniques exist,
notably \Blocki's Lipschitz estimate \cite{BlockiGrad}. However, even then our method seems to be new
in that it furnishes fiberwise Lipschitz regularity given the same on the boundary data,
while \Blocki's estimate alone gives full Lipschitz regularity starting from 
full (also in the $\del K$ directions) Lipschitz regular data. Of course, it should
be stressed that we ultimately use \Blocki's estimate in our proof, but we do so
only in the fiberwise directions.

\subsection*{Organization}

Theorem \ref{envdual} and Corollary \ref{MabuchiCor} are proved in \S\ref{TubeSec}.
The convex analogue, Proposition \ref{HRMAProp}, is proved in \S\ref{CVXSubSec}.
Theorem \ref{PRegThm} (i) concerning Lipschitz regularity of the psh-envelope is proved in \S\ref{LipRegPSHSubSec},
where we also prove Corollary \ref{LipschitzHCMACor}.
Theorem \ref{PRegThm} (ii)--(iii) and Corollary \ref{PRealRegThm}, concerning the
regularity of second derivatives of the psh- and convex-envelopes, are proved in \S\ref{RegRoofCVXPSHSubSec}.
Finally, the main regularity result concerning the subharmonic envelope, Theorem \ref{P_reg_prop},
is proved in \S\ref{proofofP_reg_prop}.

\section{The Dirichlet problem on the product of a tube domain and a manifold}
\label{TubeSec}

Suppose that $f(s,z)$ is a convex function on $\RR^k_s\times\RR^m_z$. Then
$\inf_sf(s,z)$ is either identically $-\infty$, or else a convex function on $\RR^m$
\cite[Theorem 5.7; p. 144]{Rockafellar},\cite[Theorem 1.3.1]{Kiselman-notes}.
If we replace ``convex" with ``psh" and $\RR$ by $\CC$ this is not true in general.
A special situation in which this is true was described by Kiselman. Let us recall a local version of this result \cite{Kiselman}
(cf. \cite[Theorem I.7.5]{De}).
As in \S\ref{HCMRSubSec}, let
$K\subset\RR^k$ be a convex set and denote by
$K^\CC:=K+\i\RR^k\subset \CC^{k}$ the tube domain associated to $K$.
Denote by $s$ a coordinate on $K\subset \RR^k$ and by $\tau:=s+\i t$ a
coordinate on $K^\CC\subset \CC^k$.

\begin{theorem}\label{KiselmanThm} Let $D \subset \Bbb C^n$ be a domain.
If $v\in\PSH(K^\CC\times D)$ is such that
that $v(s+\i t,z)=v(s,z)$ for all $t\in\RR^k$ then
\begin{equation}\label{Kiselman}
v(z)= \inf_{\tau \in K^\CC}v(\tau,z) 
\end{equation}
is either identically $-\infty$, or else psh on $D$.
\end{theorem}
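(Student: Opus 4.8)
The plan is to prove Kiselman's minimum principle (Theorem \ref{KiselmanThm}) by the classical sub-mean-value argument, exploiting the $t$-translation invariance to "average away" the extra $\RR^k$-directions. First I would reduce to the case $k=1$: once the result is known for one strip variable, the general case follows by peeling off the variables $s_1,\ldots,s_k$ one at a time, since freezing all but one of them leaves a function that is still psh in (one strip variable)$\times D$ and still translation invariant in the remaining $t_j$. So assume $K\subset\RR$ and write $\tau=s+\i t$. Fix $z_0\in D$ and a small ball $B\subset D$ around it; we must show $v(z)=\inf_{\tau}v(\tau,z)$ satisfies the sub-mean-value inequality on $B$ (or, more robustly, is a decreasing limit of psh functions and hence psh unless $\equiv-\infty$).

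The key technical device is truncation from below. For $c\in\RR$ set $v^c:=\max\{v,-c\}$, which is psh on $K^\CC\times D$, bounded below, and still $t$-invariant; and $v^c(z):=\inf_\tau v^c(\tau,z)=\max\{\inf_\tau v(\tau,z),-c\}$ decreases to $v(z)$ as $c\to\infty$. Since a decreasing limit of psh functions is either psh or $\equiv-\infty$, it suffices to prove each $v^c$ is psh on $D$, so I may assume from the start that $v$ is locally bounded below. Next, for $\eps>0$ let $K_\eps\Subset K$ be slightly shrunken; on $K_\eps^\CC\times D$ one can regularize $v$ by convolution in the $\tau$-variable with a radial mollifier and in the $z$-variable with the standard mollifier, obtaining smooth $v_\eps\downarrow v$, each psh and $t$-invariant, defined on (a slightly smaller set)$\times D$. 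Because $v_\eps$ is smooth and $t$-invariant, $\inf_{\tau}v_\eps(\tau,z)$ over the now compact interval $\overline{K_\eps}$ is attained and the infimum function over $z$ again decreases to $\inf_\tau v(\tau,z)$; so it is enough to handle smooth, $t$-invariant, locally-bounded-below $v$.

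For such $v$, here is the heart of the matter. Fix $z_0$, a complex line $\ell$ through $z_0$, and a disc $\{z_0+\zeta e : |\zeta|\le r\}\subset D$. For each fixed $\zeta$ the function $\tau\mapsto v(\tau,z_0+\zeta e)$ is, restricted to the strip, subharmonic and bounded below; by $t$-invariance it is a convex function of the single real variable $s\in K$, so its infimum over $s$ equals $\lim$ along a minimizing sequence $s_j$. The sub-mean-value inequality I want,
\beq\label{KiselmanSMV}
\inf_\tau v(\tau,z_0)\le \frac{1}{2\pi}\int_0^{2\pi}\inf_\tau v(\tau, z_0+re^{\i\theta}e)\,d\theta,
\eeq
I would obtain by writing the right-hand side as $\lim_{j}$ of $\frac1{2\pi}\int v(s_j, z_0+re^{\i\theta}e)\,d\theta$ only after exchanging the infimum in $\tau$ with the integral — which is exactly the subtle point. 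The clean way around this is: pick $\sigma_j\to\inf_\tau v(\tau,z_0)$ and use that for each fixed $\sigma_j$ the slice $z\mapsto v(\sigma_j,z)$ is psh on $D$, hence
$v(\sigma_j,z_0)\le \frac1{2\pi}\int_0^{2\pi} v(\sigma_j, z_0+re^{\i\theta}e)\,d\theta \le \frac1{2\pi}\int_0^{2\pi}\inf_\tau v(\tau, z_0+re^{\i\theta}e)\,d\theta$,
and let $j\to\infty$ on the left. This gives \eqref{KiselmanSMV} directly. Since $z\mapsto\inf_\tau v(\tau,z)$ is also upper semicontinuous (it is a decreasing limit of the continuous functions $\inf_{\overline{K_\eps}}v_\eps$, after the reductions above — or one argues usc directly from $t$-invariance plus convexity in $s$), the sub-mean-value property on every small disc in every complex direction shows it is psh, unless it is $\equiv-\infty$. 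Undoing the three reductions (mollification, truncation, and the one-variable-at-a-time induction) completes the proof.

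\emph{Main obstacle.} The genuinely delicate point is the interchange of "$\inf_\tau$" with "$\frac1{2\pi}\int d\theta$" in \eqref{KiselmanSMV}; the trick of testing the mean-value inequality on each fixed-$s$ slice $v(\sigma_j,\cdot)$ (which is honestly psh in $z$) and only then passing to the infimum sidesteps it. A secondary point requiring care is the semicontinuity and well-definedness of the infimum when $K$ is merely open and bounded rather than compact — handled by the shrink-and-mollify step, which replaces $K$ by a compact interval on which the smooth $t$-invariant (hence convex-in-$s$) function attains its minimum.
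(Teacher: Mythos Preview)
The paper does not prove Theorem \ref{KiselmanThm}; it is quoted from Kiselman's original article and Demailly's notes as a known result, so there is no paper proof to compare against.

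More importantly, your proposed argument has a fatal error at precisely the step you call ``the heart of the matter.'' You write
\[
v(\sigma_j,z_0)\le \frac1{2\pi}\int_0^{2\pi} v(\sigma_j, z_0+re^{\i\theta}e)\,d\theta \le \frac1{2\pi}\int_0^{2\pi}\inf_\tau v(\tau, z_0+re^{\i\theta}e)\,d\theta,
\]
but the second inequality is backwards: for every $\theta$ one has $v(\sigma_j, z_0+re^{\i\theta}e)\ge \inf_\tau v(\tau, z_0+re^{\i\theta}e)$, not $\le$. With the inequality in its correct direction the chain breaks and you cannot conclude \eqref{KiselmanSMV}. Notice, moreover, that your core step never uses the $t$-invariance of $v$ or the convexity of $K$: if it worked, it would show that the pointwise infimum of \emph{any} family $\{v(\sigma,\cdot)\}_\sigma$ of psh functions is psh, which is false (e.g.\ $\inf\{\Re z,-\Re z\}=-|\Re z|$ is not subharmonic on $\CC$).

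The genuine content of Kiselman's principle is that the $t$-invariance forces $s\mapsto v(s,z)$ to be convex, and this convexity interacts with the plurisubharmonicity of $v$ in all variables in a way that cannot be captured by slicing at fixed $\sigma_j$. The standard proofs either (a) reduce to smooth $v$ with a unique interior minimizer $s(z)$, compute the Levi form of $u(z)=v(s(z),z)$ directly, and use a Schur-complement argument on the full complex Hessian of $v$ in $(\tau,z)$; or (b) pass through the partial Legendre transform in $s$. Your preliminary reductions (truncation from below, mollification, peeling off one $s$-variable at a time) are all fine, but the central inequality must be replaced by one of these substantive arguments.
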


\def\o{\omega}

This immediately implies the following global version.
As in \S\ref{HCMRSubSec},  we denote by $(M,\o)$ a K\"ahler manifold
and by $\pi_2:K^\CC\times M\ra M$, $\pi_1:K^\CC\times M\ra K^\CC$ the natural projections.

\begin{corollary}
\label{KiselmanCor}
Assume that $f\in \PSH(K^\CC\times M,\pi_2^\star\o)$ satisfies
$f(s,z)=f(s+\i t,z)$ for all $t\in\RR^k$.
Then $f^\star(\sigma,z)$, as defined in \eqref{LegTrans1Eq},
satisfies $f^\star(\sigma,\,\cdot\,)\in\PSH(M,\o)$ for each
$\sigma\in \RR^k$.
\end{corollary}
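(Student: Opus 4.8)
The plan is to deduce the global statement from Kiselman's local Theorem \ref{KiselmanThm} by working in coordinate charts on $M$ and checking that the $\pi_2^\star\o$-plurisubharmonicity is a local condition that patches. First I would fix $\sigma\in\RR^k$ and set $g(\tau,z):=f(\tau,z)-\langle\sigma,\Re\tau\rangle$; since $\langle\sigma,\Re\tau\rangle = \langle\sigma, s\rangle$ is pluriharmonic on $K^\CC$ (it is the real part of the holomorphic function $\tau\mapsto\langle\sigma,\tau\rangle$, up to the factor of $\i$ in the imaginary directions), subtracting it does not change the complex Hessian in the $\tau$-variables nor in the mixed $\tau,z$ directions, so $g\in\PSH(K^\CC\times M,\pi_2^\star\o)$, and $g$ is still $\RR^k$-invariant in the imaginary directions $t$. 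Then by definition $f^\star(\sigma,z)=\inf_{s\in K}[f(s,z)-\langle\sigma,s\rangle]=\inf_{\tau\in K^\CC}g(\tau,z)$, which is exactly the shape of \eqref{Kiselman}.

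Next I would localize in $M$: pick any coordinate chart $U\subset M$ biholomorphic to a domain $D\subset\CC^n$ on which $\o=\i\ddbar\psi$ for a smooth local potential $\psi$. On $K^\CC\times U$ the function $g+\pi_2^\star\psi$ is genuinely plurisubharmonic (this is the meaning of $g\in\PSH(K^\CC\times M,\pi_2^\star\o)$), and it is still independent of $t\in\RR^k$ because $\psi$ does not depend on $\tau$ at all. Applying Theorem \ref{KiselmanThm} with this domain $D$ to $g+\pi_2^\star\psi$ shows that $z\mapsto \inf_{\tau\in K^\CC}\big(g(\tau,z)+\psi(z)\big) = f^\star(\sigma,z)+\psi(z)$ is either $\equiv-\infty$ or psh on $D$; equivalently $f^\star(\sigma,\cdot)$ is $\o$-psh on $U$ (or $\equiv-\infty$ there). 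Since $M$ is connected, the $-\infty$ alternative either holds on all of $M$ or nowhere (a $\o$-psh function on a chart that is not $\equiv-\infty$ is locally integrable, and the two cases cannot meet along the boundary of a chart), so we get a genuine dichotomy: $f^\star(\sigma,\cdot)\in\PSH(M,\o)$ or $f^\star(\sigma,\cdot)\equiv-\infty$. In the situations where the corollary is invoked (e.g.\ $f=\vp$ bounded, or $f$ with bounded fibers) the infimum is finite, so the $-\infty$ case does not occur; if one wants the bare statement one simply includes that alternative, as Theorem \ref{KiselmanThm} does.

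The only genuinely delicate point is the interchange of ``local in $M$'' with ``psh in the $\pi_2^\star\o$ sense,'' i.e.\ verifying that subtracting the local potential $\psi(z)$ commutes with the partial infimum over $\tau$ — but this is immediate because $\psi$ is a function of $z$ alone, so $\inf_{\tau}(g+\psi)=(\inf_\tau g)+\psi$ pointwise. A secondary bookkeeping point is that Theorem \ref{KiselmanThm} as stated requires $v\in\PSH(K^\CC\times D)$ and upper semicontinuity of the resulting infimum is part of its conclusion (via the psh alternative), so no separate semicontinuity argument is needed here. I do not expect any real obstacle: the content is entirely in Theorem \ref{KiselmanThm}, and the corollary is the routine globalization together with the observation that the linear term $\langle\sigma,s\rangle$ is pluriharmonic and the local K\"ahler potential is $\tau$-independent.
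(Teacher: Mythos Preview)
Your proposal is correct and is precisely the routine globalization the paper has in mind: the paper does not write out a proof but simply states that Theorem \ref{KiselmanThm} ``immediately implies'' the corollary, and your argument---subtract the pluriharmonic linear term $\langle\sigma,s\rangle$, add a local K\"ahler potential $\psi$ to reduce to genuine plurisubharmonicity on $K^\CC\times D$, apply Theorem \ref{KiselmanThm}, and note that $\psi$ pulls out of the infimum over $\tau$---is exactly the natural unpacking of that word ``immediately.'' Your handling of the $-\infty$ alternative is also appropriate, since in every application in the paper $f$ is bounded.
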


\begin{proof}[Proof of Theorem \ref{envdual}] 
We argue that $\vp$ is upper semi-continuous parallel with the proof of the formula
\begin{equation}\label{main_identity_proof}
\vp^\star(\sigma,z)=
\inf_{s\in K}[\varphi(s,z)-\langle\sigma,s\rangle]=P\{v_s-\langle s,\sigma\rangle\}_{s\in\del K}(z), \ \sigma \in \Bbb R^k, \ z \in M.
\end{equation}

To start, observe that both
$\vp(\,\cdot\,,z)$ and $(\textup{usc}\,\vp)(\,\cdot\,,z)$ are convex and bounded functions on $K$ for each $z\in M$ (note that $\sup\textup{usc}\,\vp= \sup\vp$). Indeed, the former is a supremum of convex functions , where as the latter is the restriction to $K \times \{z\}$ of an $\RR^k$-invariant $\omega-$psh function by Choquet's lemma.
Thus, it suffices to prove that
\beq
\label{LegKeyEq}
\vp^\star(\sigma,z) = (\textup{usc}\,\vp)^\star(\sigma,z),
\eeq
for all $\sigma \in \RR^k$ since then,
by applying another partial Legendre transform it follows that
$\vp=\textup{usc}\,\vp$.  The proof of \eqref{LegKeyEq} will be implicit in the proof of \eqref{main_identity_proof} below.

Recall that by Bedford-Taylor theory \cite[Theorem 1.22]{Kol} the set
$E = \{ \vp  < \textup{usc}\,\vp\} \subset K^\CC \times M$ has capacity zero, in particular its Lebesgue measure is also zero (meaning that $\int_E dV_{K^\CC \times M}=0$ for any smooth volume form $dV_{K^\CC \times M}$ on $K^\CC \times M$). As both $u$ and $\textup{usc}\,u$ are $\RR^k$-invariant, $E$ is
also $\RR^k$-invariant with base $B \subset K \times M$ (note that $B$ is \emph{not} a subset of $K$). Clearly, the Lebesgue measure
of $B$ is zero. For $z \in M$ we introduce the sets
$$
B_z = \pi_1(B \cap K \times \{ z\}) \subset K.
$$
It follows that $B_z$ has Lebesgue measure zero for all $z \in M \setminus F$, where $F \subset M$ has Lebesgue measure zero.

Suppose $z  \in M \setminus F$, we claim that in fact $B_z$ is empty. This follows, as the continuous convex functions $\vp(\,\cdot\,,z)$ and
$(\textup{usc}\,\vp)(\,\cdot\,,z)$ agree on the dense set $K \setminus B_z$, hence they have to agree on all of $K$, hence $B_z$ is empty. This implies that
\begin{equation}\label{uscIndetity}
\vp^\star(\sigma,z) = (\textup{usc}\,\vp)^\star(\sigma,z),
\quad \textup{ for all } z \in M \setminus F,  \sigma \in \RR^k.
\end{equation}
Now, by Corollary \ref{KiselmanCor}, for each $\sigma\in \RR^k$,
the function $(\textup{usc } \vp)^\star(\sigma,\,\cdot\,)$ belongs to $\PSH(M,\o)$.
Moreover, by definition of $\vp$ we have
\begin{equation}\label{vpest}
\vp^\star_\sigma \le \vp_s-\langle\sigma,s\rangle \textup{ for all }s\in K.
\end{equation}
We can in fact extend this estimate to the boundary of $\del K$:
\begin{claim}
For all $s\in\del K$ and $\sigma\in\RR^k$,
 $\vp^\star(\sigma,z) \le v_s(z)-\langle\sigma,s\rangle$.
\end{claim}

Indeed, as $v_z \in C(\del K)$ for all $z \in M$, it follows that $\vp(p,z) \leq \mathcal P[v_z](p), \ p \in K$, where $\mathcal P[v_z] \in C(\overline K)$ is the harmonic function on $K$ satisfying $\mathcal P[v_z]|_{\partial K}=v_z.$ This implies that $\limsup_{p \to s}\vp_p(z) \leq v_s(z)$ for all $z \in M, \ s \in \partial K$, hence we can take the $\limsup$ of the right hand side of \eqref{vpest} to conclude the claim.

Thus, by \eqref{uscIndetity} we also have $(\textup{usc}\,\vp)^\star(\sigma,z)
\le v_s(z)-\langle\sigma,s\rangle$ for $z \in M \setminus F, \ s \in \partial K$. As $F$ has Lebesgue measure zero we claim that this inequality extends to all $z \in M$. This follows from the fact that  $(\textup{usc}\,\vp)^\star_\sigma$ and $ v_s-\langle\sigma,s\rangle$ are $\omega-$psh for fixed $s \in \partial K$, hence by the sub-meanvalue property we can write:
$$(\textup{usc}\,\vp)^\star_\sigma(z) = \lim_{r \to 0} \oint_{B(z,r)}(\textup{usc}\,\vp)^\star_\sigma(\xi)dV(\xi) \leq \lim_{r \to 0} \oint_{B(z,r)}(v_s(\xi)-\langle\sigma,s\rangle) dV(\xi) =\vp_s(z)-\langle\sigma,s\rangle,$$
for all $z \in M$, where $B(z,r)$ is a coordinate ball around $z$ and $dV$ is the standard Euclidean measure in local coordinates.

Thus,
$(\textup{usc}\,\vp)^\star(\sigma,\cdot)
$ is a competitor in
the definition of
$P\{v_s-\langle s,\sigma\rangle\}_{s\in\del K}$ concluding that
\begin{equation}\label{LegendreIneq1}
\vp^\star(\sigma,\,\cdot\,)
\le
(\h{usc}\,\vp)^\star(\sigma,\,\cdot\,)
\le P\{v_s-\langle\sigma,s\rangle\}_{s \in \partial K}.
\end{equation}

Conversely, let $\chi\in\PSH(M,\o)$ satisfy $\chi\le
v_a-\langle a,\sigma\rangle$ for each
${a\in\del K}$.
We claim that $\chi \le \varphi_s-\langle s,\sigma\rangle$
 for every $s\in K$. Indeed, by \eqref{uUpperEnvEq},
$$
\vp_s-\langle s,\sigma\rangle=\sup\{w_s-\langle s,\sigma\rangle\in L^\infty\cap\PSH(K^\CC\times M,\pi_2^\star\o):
(w-\langle s,\sigma\rangle)|_{\partial K^\CC}\le v-\langle s,\sigma\rangle\},
$$
so $\chi$ is a competitor in this last supremum, proving the claim.
Now, taking the infimum over all $s\in K$ it follows that
\begin{equation}\label{LegendreIneq2}
\vp^\star(\sigma,\cdot) \ge P\{v_s-\langle s,\sigma\rangle\}_{s\in\del K}.
\end{equation}
Putting together \eqref{LegendreIneq1} and \eqref{LegendreIneq2} the identities \eqref{main_identity_proof} and \eqref{LegKeyEq} follow, proving that $u$ is upper semi-continuous.
\end{proof}

\begin{remark}\label{SolutionRemark} 
{\rm
To guarantee that $\vp$ defined by \eqref{uUpperEnvEq} is an actual solution of \eqref{HCMAEq}, 
one can, e.g., assume that there exists a subsolution, by which we mean
an $\RR^k$-invariant $w\in L^\infty\cap\PSH(K^\CC\times M,\pi_2^\star\o)$ satisfying $w|_{\partial K^\CC}=v$. 
In fact, if such a subsolution exists, then $w_z \leq \vp_z$, implying that 
$\vp_z|_{\del K}$ lies above the boundary data. On the other hand, 
$\vp_z \leq \mathcal P[v_z]$,  
where $\mathcal P[v_z] \in C(\overline K)$ is the harmonic function on $K$ satisfying 
$\mathcal P[v_z]|_{\partial K}=v_z.$ Thus, $\vp_z|_{\del K}$ also lies below the boundary data.
In sum, $\vp|_{\del K^\CC}=v$.
}
\end{remark}

Providing a subsolution is often possible given special properties of $K$ or the boundary data $v$. An instance of this is the situation described in Corollary \ref{MabuchiCor}:
\begin{proof}[Proof of Corollary \ref{MabuchiCor}] 
By Theorem \ref{envdual},
all one needs to  verify is that $\varphi$, as defined in \eqref{uUpperEnvEq}, satisfies $\vp|_{\{i\}\times\RR}=v_i,\; i=0,1$. Formula \eqref{Mabuchimain_identity} follows then from \eqref{main_identity}. However, by an observation of Berndtsson
\cite{Br} we have that the function $w(s,z) = \max\{ v_0(z) - As,v_1(z) + A(1-s)\} \in PSH(K^\CC \times M,\pi_2^*\o)$ satisfies  $\psi|_{\{i\}\times\RR}=v_i,\; i=0,1,$ where $A = \max \{ \| v_0\|_{L^\infty},\| v_1\|_{L^\infty}\}.$ 
Hence, $w$ is a subsolution in the sense of Remark \ref{SolutionRemark}.
\end{proof}

We remark in passing that the general argument to prove upper semicontinuity
given in Theorem \ref{envdual} can be avoided in the special setting of Corollary \ref{MabuchiCor}
(i.e., when $K=[0,1]$). Indeed, by convexity in $s$,
$\vp(s,z) \leq s v_0(z) + (1-s)v_1(z)$ for all $(s,z)\in[0,1]\times M$,
thus also $\textup{usc}\,\vp$ satisfies the same inequality.
This last estimate in turn implies that $\textup{usc}\,\vp$ is a candidate in the supremum defining 
$\vp$, thus $\textup{usc}\,\vp = \vp$ (cf. \cite{Br}).

\subsection{A convex version for the HRMA}
\label{CVXSubSec}

In this subsection we prove the a version of Corollary \ref{MabuchiCor}
for the homogeneous real \MA (HRMA) equation.
While a proof of Proposition \ref{HRMAProp} and even its generalization to
higher dimensional $K$ can be given along very similar lines to the proof
of Theorem \ref{envdual}, we give below a somewhat different argument.

\begin{proof}[Proof of Proposition \ref{HRMAProp}]
As observed by Semmes \cite{Semmes1988,S}, the HRMA is linearized
by the partial Legendre transform {\it in the $\RR^n$ variables}.
Thus, the solution to the HRMA is given by
\beq\label{DoubleLegEq}
\psi(s,x)=((1-s)\psi_0^\star+s\psi_1^\star)^\star(x),
\eeq
where
$\psi_i^\star(y)=\sup_{y\in\RR^n}[\langle x,y\rangle-\psi_i(y)]$.
As is well-known, this is equal to the {\it infimal convolution
of $\psi_0$ and $\psi_1$} \cite[Theorem 38.2]{Rockafellar},
\beq\label{InfConvEq}
\inf_{\{x_0,x_1\in\RR^n\,:\, (1-s)x_0+sx_1=x\}}[(1-s)\psi_0(x_0)+s\psi_1(x_1)].
\eeq
This also follows directly from the fact that $\psi$ solves the HRMA,
since by \cite{RZ} a solution of the HRMA solves a Hamilton--Jacobi equation,
and \eqref{InfConvEq} is just the Hopf--Lax formula in that setting.
Now, we take the negative Legendre transform of \eqref{DoubleLegEq} in $s$
to obtain,
$$
\begin{aligned}
\psi^\star_x(\sigma)
&=\min_{s\in[0,1]}
\Big[
\inf_{\{x_0,x_1\in\RR^n\,:\, (1-s)x_0+sx_1=x\}}[(1-s)\psi_0(x_0)+s\psi_1(x_1)]
-s\sigma
\Big]
\cr
&
=
\min_{s\in[0,1]}\Big[
\inf_{\{x_0,x_1\in\RR^n\,:\, (1-s)x_0+sx_1=x\}}[(1-s)\psi_0(x_0)+s(\psi_1(x_1)-\sigma)]
\Big].
\end{aligned}
$$
Now we will show that this last expression is equal to
\beq\label{CvxHullEq}
\sup\{ v\,:\, v \h{\ is convex on } \RR^n \h{\ and $v\le\min\{\psi_0,\psi_1-\sigma\}$}\}=\min\{\psi_0,\psi_1-\sigma\}^{\star\star}.
\eeq
Fix $x\in\RR^n$, and
let $s\in[0,1]$ and $x_0,x_1\in\RR^n$ be
such that $(1-s)x_0+sx_1=x$. Let $v$ be a convex
function satisfying $v\le \min\{\psi_0,\psi_1-\sigma\}$.
Then,
$$
(1-s)\psi_0(x_0)+s(\psi_1(x_1)-\sigma)
\ge
(1-s)v(x_0)+sv(x_1)
\ge v(x),
$$
by
convexity of $v$.
Thus, $\psi^\star_x(\sigma)\ge \min\{\psi_0,\psi_1-\sigma\}^{\star\star}$.

Conversely,  the expression \eqref{InfConvEq}
is a convex function jointly in $s$ and $x$ (since it is evidently convex
in $x$ by \eqref{DoubleLegEq} and it solves the HRMA in all variables). By
the minimum principle for convex functions 
then $\psi^\star_x(\sigma)$ is convex in $x$.
By the definition of the negative Legendre transform in $s$,
$\psi^\star_x(\sigma)\le \min_{s\in\{0,1\}}[\psi_s(x)-s\sigma]
=\min\{\psi_0(x),\psi_1-\sigma\}$. Thus, $\psi^\star_x(\sigma)$
is a competitor in the left hand side of \eqref{CvxHullEq}.
Hence, $\psi^\star_x(\sigma)\le \min\{\psi_0,\psi_1-\sigma\}^{\star\star}$.
\end{proof}

\section{Regularity of upper envelopes of families
}

The bulk of this section is devoted to the proof of Theorem  \ref{PRegThm} (ii)--(iii)
and Corollary \ref{PRealRegThm}
that establish the regularity of psh- and convex-envelopes envlopes
associated to obstacles of the form $\min\{b_0,b_1\}$, 
that we refer to as `rooftop' envelopes (see Figure
\ref{figP(v0v1)}). However, we begin by first proving the Lipschitz regularity
of psh-envelopes (Theorem \ref{PRegThm} (i)).

\subsection{Lipschitz regularity of psh-envelopes}
\label{LipRegPSHSubSec}
Let $v \in C^\infty(M)$. 
Berman developed the following approach for constructing $P(v)$,
generalizing a related construction for obtaining ``short-time" solutions
to the Ricci continuity method, introduced in \cite{R08}, in turn based on a result of Wu \cite{Wu}
(a new approach to which has been given in \cite[\S9]{JMR}, see \cite[\S6.3]{R14}
for an exposition of these matters). 
For $\beta$ positive and sufficiently large one considers the equations
\begin{equation}\label{approx_equation}
(\o + \i\del\dbar u_\beta)^n = e^{\beta (u_\beta - v)}\o^n.
\end{equation}
By the classical work of Aubin and Yau, \eqref{approx_equation} admits a smooth solution
$u_\beta$. 
Berman proves that, as $\beta$ tends to infinity, $u_\beta$ converges to $P(v)$ uniformly, and that, moreover,
there is an a priori Laplacian estimate in this setting \cite{BeApprox}.
In this section we observe that, as expected, also an a priori Lipschitz estimate holds,
by directly applying \Blocki's estimate. In other words, we prove Theorem
\ref{PRegThm} (i). The proof will show that the 
constant in Theorem \ref{PRegThm} (i) depends on a lower bound
of the bisectional curvature of $(M,\o)$ and on $||v||_{C^1(M)}$.
We claim no originality in the proof below.

\begin{proof}[Proof of Theorem \ref{PRegThm} (i)]
\def\be{\beta}\def\al{\alpha}

It suffices, by a standard approximation procedure, to
assume that $v$ is smooth. 
For simplicity of notation, we will often denote 
$u_\beta$ by just $u$. 
The argument follows \cite[Theorem 1]{BlockiGrad} very closely. 
Let $B'$ be some sufficiently large positive constant to be fixed later.
Let $C_0:=\sup_{\beta>2}||u_\be||_{C^0}+1$.
Let $\phi: M \to \Bbb R$ be the following function:
$$\phi:= \log|\del u|_\o^2 - \gamma (u),$$
where 
$\gamma:[-C_0,C_0]\to \Bbb R$ is a 
smooth non-decreasing
function to be fixed later.
Since $\| u\|_{C^0} \leq C(M,\| v\|_{C^0}),$ independently of $\beta$ 
\cite{BeApprox}, $\gamma$ is thus defined on some fixed finite interval.

Suppose $\phi$ attains its maximum at $p \in M$. Let $z=(z_1,\ldots,z_n)$ denote 
holomorphic normal coordinates  around this point. 
Let $g$ denote a local potential for $\o$ in this chart, i.e., $\i\del\dbar g =\o$. Set $h := g + u$. 
We can additionally suppose that $\i\del\dbar u(p)$ is diagonal in our coordinates. 
Since all our local calculations will be carried out  at the point $p$ we omit the dependence on this point 
from the subsequent computations. 
Let 
$$
\alpha:=|\del u|_\o^2.
$$
Thus,
\begin{equation}\label{first_order_id}
0 = \frac{\partial}{\partial z_j}\phi = \phi_j = \frac{\alpha_j}{\alpha} - \gamma'(u) u_j, \q j = 1,\ldots,n,
\end{equation}
and so (omitting from now and on symbols for summation that can be understood from the context),
\begin{flalign}
0 
\geq 
\Delta_{\omega_u} \phi 
= 
\frac{\phi_{k\bar k}}{h_{k\bar k}} 
= 
\frac{1}{h_{k\bar k}}\Big(\frac{\alpha_{k\bar k}}{\alpha} - \frac{|\alpha_k|^2}{\alpha^2} 
- 
\gamma' u_{k\bar k} - \gamma''\al\Big)
=
\frac{1}{h_{k\bar k}}\Big(\frac{\alpha_{k\bar k}}{\alpha} - \gamma' u_{k\bar k} - (\gamma''+\gamma'^2) \al\Big),
\end{flalign}
The next formula holds for each fixed $k=1,\ldots,n$ (no summation)
\begin{flalign}
\alpha_{k\bar k} = 2 \Re \ u_{jk\bar k} u_{\bar j} + |u_{jk}|^2 + |u_{j\bar k}|^2 - u_j g_{j\bar l k \bar k}u_{\bar l}
\geq 
2 \Re \ u_{jk\bar k} u_{\bar j} + |u_{jk}|^2  - B\al,
\end{flalign}
whenever $-B$ is a lower bound for the bisectional curvature of $\o$.
Using this, the identity $1 + u_{k\bar k}=h_{k\bar k}$, and fact that $g_{jk\bar k}=0$,
and summing over $k$ we have, 
\begin{flalign*}
0 \geq &\frac{1}{h_{k\bar k}}\Big(\frac{2 \Re \ h_{jk\bar k} u_{\bar j} + |u_{jk}|^2- B\al}{\alpha} + \gamma' -\gamma'h_{k\bar k} - (\gamma''+\gamma'^2) \al \Big),
\end{flalign*}
Multiplying across with $\alpha$,  
\begin{flalign}
0 \geq 
\frac{1}{h_{k\bar k}}\Big(2 \Re \ h_{jk\bar k} u_{\bar j} + |u_{jk}|^2 
+ 
\al\big[\gamma'-B -\gamma'h_{k\bar k} - (\gamma''+\gamma'^2) \al\big] \Big) 
\label{int_est}
\end{flalign}
By \Blocki's trick \cite[(1.15)]{BlockiGrad}, we also have the following estimate:
\begin{equation}\label{blocki_trick}
\frac{|u_{jk}|^2}{h_{k\bar k}} \geq \al \Big( \gamma'^2 \frac{|u_k|^2}{h_{k\bar k}} -2\gamma' \Big) -2.
\end{equation}

The computations so far are general and taken from \cite{BlockiGrad}. We now bring
the equation we are interested in,
$\log\frac{\det[h_{j\bar l}]}{\det[g_{j\bar l}]} = \beta (u - v)$,
into the picture.
Differentiating this equation at $p$ yields
$\frac{h_{jk\bar k}}{h_{k\bar k}}= \beta (u_j - v_j).$
Thus, 
\begin{equation}\label{3deriv_est}
2\Re \ \frac{h_{jk\bar k}}{h_{k\bar k}}u_{\bar j}= 2\Re \ \beta (u_j - v_j)u_{\bar j} = 2\beta |u_j|^2  - 2\beta \Re \ v_j u_{\bar j}
\geq 2\beta |u_j|^2 - 2\beta|v_j|^2.
\end{equation}
Putting \eqref{3deriv_est} and \eqref{blocki_trick} into \eqref{int_est} we obtain:
\begin{flalign}
0 \geq& 2\beta \al - 2\beta |v_j|^2 + \al \Big( \gamma'^2 \frac{|u_k|^2}{h_{k\bar k}} -2\gamma' \Big) 
-2 + \frac{\al}{h_{k\bar k}}\Big[\gamma' -B-\gamma'h_{k\bar k} - (\gamma''+\gamma'^2) \al\Big] \nonumber \\
=& 2(\beta-\gamma') \al - 2\beta |v_j|^2  -2 -n\gamma'+ \frac{\al}{h_{k\bar k}}(\gamma' -B - \gamma'' \al)
\end{flalign}
Our wish is to get rid of the last term in the right. 
For this reason, we choose $\gamma:[-C_0,C_0] \to \Bbb R$ to be $\gamma(t)=-t^2/2 + (C_0 + B)t$. Then $2C_0 + B > \gamma'>B$, $\gamma'' <0$. With this choice, in our last estimate the rightmost term becomes positive, so we can write:
\begin{flalign}
0 \geq (2\beta-2C_0 - B) \al - 2\beta |v_j|^2  -2 -n(2C_0+B).
\end{flalign}
This gives
\begin{flalign}
\al \leq \frac{ 2\beta |v_j|^2   -2 -n(2C_0+B)}{2\beta-2C_0 - B},
\end{flalign}
concluding the proof of Theorem \ref{PRegThm} (i), since the constant on the right hand side can
be majorized independently of $\beta$.
\end{proof}
We turn to prove a corollary of this estimate and the formula for
the Bremermann upper envelope 
$\vp$ introduced in \eqref{uUpperEnvEq}
(Theorem \ref{envdual}), namely,  the
Lipschitz regularity of $\vp$.

\begin{proof}[Proof of Corollary \ref{LipschitzHCMACor}] It follows from the definition of $\vp$ that $\| \vp\|_{C^0} \leq \| v \|_{C^0}$. To finish the proof we need to prove that 
\begin{equation}\label{GradEst}
|\vp(s,\cdot)|_{C^{0,1}} \leq C(M,\o,\sup_{s \in \partial K}\|v(s,\cdot)\|_{C^1}), \ s \in \partial K.
\end{equation}
Fix $h \in K$. By \eqref{main_identity} we have
\begin{equation}
\vp(h,z)=(P\{v_s-\langle s,\sigma\rangle\}_{s\in\del K})^\star(h,z)
=\sup_{\sigma\in\RR^k}[P\{v_s-\langle s-h,\sigma\rangle\}_{s\in\del K}(z)], \ z \in M.
\end{equation}
Fix $\sigma \in \Bbb R^k$. As $K$ is bounded, 
by Lemma \ref{LipFamilyLemma} below, $\phi_\sigma := \inf_{s \in \del K}(v_s-\langle s-h,\sigma\rangle) \in C^{0,1}(X)$, 
with $|\phi_\sigma |_{C^{0,1}} 
\leq C(\sup_{s \in \partial K}|v(s,\cdot)|_{C^{0,1}})$. 
By Theorem \ref{PRegThm} (i) it follows that
$$| P(\phi_\sigma)|_{C^{0,1}} \leq C(|\phi_\sigma|_{C^{0,1}}) \leq  C(\sup_{s \in \partial K}|v(s,\cdot)|_{C^{0,1}}).$$
As $\vp(h,\cdot)=\sup_{\sigma \in \Bbb R^k}P(\phi_\sigma)$, \eqref{GradEst} follows from another application of Lemma
\ref{LipFamilyLemma}.
\end{proof}
The next lemma is a consequence of the Arzel\`a-Ascoli compactness theorem.

\begin{lemma} \label{LipFamilyLemma}Suppose $\{ f_\alpha\}_{\alpha \in A} \subset C^{0,1}(M)$ with $\sup_{\alpha \in A} |f_\alpha|_{C^{0,1}} < \infty$. Then:

\noindent
(i) Either $\phi:=\inf_{\alpha \in A} f_\alpha \equiv -\infty$, or $\phi \in C^{0,1}(M)$ with $|\phi|_{C^{0,1}} \leq \sup_{\alpha \in A} |f_\alpha|_{C^{0,1}}$. 

\noindent(ii) Either $\psi:=\sup_{\alpha \in A} f_\alpha \equiv \infty$, or $\psi \in C^{0,1}(M)$ with $|\psi|_{C^{0,1}} \leq \sup_{\alpha \in A} |f_\alpha|_{C^{0,1}}$.
\end{lemma}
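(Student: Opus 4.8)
The plan is to prove (i) directly and to deduce (ii) for free by applying (i) to the family $\{-f_\alpha\}_{\alpha\in A}$, using that $\sup_\alpha f_\alpha=-\inf_\alpha(-f_\alpha)$ and that the $C^{0,1}$-seminorm is unchanged under $f\mapsto -f$. Throughout I work with the Riemannian distance $d$ induced by $\o$; since $M$ is compact and connected, the diameter $D:=\sup_{x,y\in M}d(x,y)$ is finite, and I write $L:=\sup_{\alpha\in A}|f_\alpha|_{C^{0,1}}<\infty$ for the common Lipschitz bound. I may assume $A\neq\emptyset$, since otherwise the infimum is $+\infty$ and there is nothing in the statement to prove.

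First I would dispose of the dichotomy in (i), i.e., rule out that $\phi:=\inf_{\alpha}f_\alpha$ is finite at some points of $M$ and equal to $-\infty$ at others. Suppose $\phi(x_0)>-\infty$ for some $x_0\in M$. For any $x\in M$ and any $\alpha\in A$ the Lipschitz bound gives $f_\alpha(x)\ge f_\alpha(x_0)-L\,d(x,x_0)\ge\phi(x_0)-LD$; taking the infimum over $\alpha$ yields $\phi(x)\ge\phi(x_0)-LD>-\infty$. On the other hand $\phi\le f_{\alpha_0}$ for a fixed $\alpha_0\in A$, and $f_{\alpha_0}$ is bounded above on the compact $M$, so $\phi$ is real-valued on all of $M$. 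This is exactly the place where compactness is used: finiteness of $D$ is what prevents $\phi$ from taking the value $-\infty$ on a proper nonempty subset of $M$, and it is also what makes the family automatically uniformly bounded once $\phi$ is finite somewhere.

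Next I would establish the Lipschitz estimate. For any $x,y\in M$ and any $\alpha\in A$, the bound $|f_\alpha|_{C^{0,1}}\le L$ gives $f_\alpha(x)\le f_\alpha(y)+L\,d(x,y)$, and since also $\phi(x)\le f_\alpha(x)$ we obtain $\phi(x)\le f_\alpha(y)+L\,d(x,y)$; taking the infimum over $\alpha$ on the right gives $\phi(x)\le\phi(y)+L\,d(x,y)$. Interchanging $x$ and $y$ yields the reverse inequality, hence $|\phi(x)-\phi(y)|\le L\,d(x,y)$, so $\phi\in C^{0,1}(M)$ with $|\phi|_{C^{0,1}}\le L$. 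If one uses the convention that the $C^{0,1}$-norm also includes the $L^\infty$-norm, the remaining part of the estimate is immediate, since an infimum of functions each bounded by $B$ in absolute value is itself bounded by $B$. This is the advertised consequence of equi-Lipschitz continuity; equivalently, one may extract a countable subfamily via Choquet's lemma so that $\phi=\inf_{i}f_{\alpha_i}$, observe that the truncations $\min_{1\le i\le N}f_{\alpha_i}$ form a uniformly bounded, equicontinuous, decreasing sequence, and apply Arzel\`a--Ascoli to pass to a uniform limit that must equal $\phi$ and inherits the bound $|\cdot|_{C^{0,1}}\le L$.

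In short, there is no serious obstacle: the only subtlety worth stating carefully is the first step, where compactness rules out the mixed case in the dichotomy and supplies the uniform bounds needed to invoke Arzel\`a--Ascoli; the Lipschitz estimate itself is a one-line infimum/supremum manipulation.
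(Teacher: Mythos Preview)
Your argument is correct. The paper does not actually supply a proof of this lemma: it merely states that the result ``is a consequence of the Arzel\`a--Ascoli compactness theorem'' and moves on. Your direct approach---first ruling out the mixed case in the dichotomy via the finite-diameter bound, then obtaining the Lipschitz estimate by the one-line infimum manipulation---is more elementary than what the paper gestures at, and in fact avoids any appeal to compactness of function spaces; the Arzel\`a--Ascoli route you sketch at the end (extracting a countable subfamily and passing to a uniform limit of finite minima) is presumably what the authors had in mind, but your direct argument is cleaner and yields the same constant.
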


\subsection{Regularity of rooftop convex- and psh-envelopes }
\label{RegRoofCVXPSHSubSec}

In this subsection we prove Theorem \ref{PRegThm} by using
Theorem \ref{P_reg_prop}. The proof of the latter is postponed
to \S\ref{proofofP_reg_prop}. First, we recall the second order estimates
of Berman \cite[Theorem 1.1, Remark 1.8]{Be}
and Berman--Demailly \cite[Theorem 1.4]{BD}:

\begin{theorem}
\label{BDreg} Let $b \in C^{1,1}(M)$. Then,
(i)
$\|P(b)\|_{C^{1\bar1}} \leq C(\|b\|_{C^{1\bar1}})$, and
(ii) If $[\omega_0] \in H^2(M,\Bbb Z)$, then $\|P(b)\|_{C^{2}} \leq C(\| b\|_{C^{2}}).$
\end{theorem}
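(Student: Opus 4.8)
\textbf{Proof proposal for Theorem \ref{BDreg}.}

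The plan is to prove both bounds by approximating the generally non-smooth envelope $P(b)$ by solutions of a \emph{non-degenerate} complex \MA equation, following Berman's ``zero-temperature'' scheme already invoked in the proof of Theorem \ref{PRegThm}(i), and then to establish a priori Hessian estimates that survive the limit. Throughout we may assume $b$ is smooth: replacing $b$ by a chartwise mollification $b_\eps$ gives $\|b_\eps\|_{C^{1\bar1}}\le\|b\|_{C^{1\bar1}}$ (resp. $\|b_\eps\|_{C^{2}}\le\|b\|_{C^{2}}$) up to a fixed factor, while $P(b_\eps)\to P(b)$ uniformly because $f\mapsto P(f)$ is order-preserving and $1$-Lipschitz for the sup norm. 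A uniform second-order estimate on $P(b_\eps)$ then passes to the limit, since the relevant balls are closed under uniform convergence of the functions together with weak-$\star$ convergence of the bounded second derivatives.

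For (i), consider for $\beta>0$ large the equation
\beq
(\o+\i\ddbar u_\beta)^n=e^{\beta(u_\beta-b)}\o^n,
\eeq
which has a smooth solution $u_\beta$ by Aubin--Yau, satisfies $\|u_\beta\|_{C^0}\le C(\|b\|_{C^0})$ uniformly in $\beta$ by the comparison principle, and converges to $P(b)$ uniformly as $\beta\to\infty$ by Berman \cite{BeApprox}. The heart of the matter is a bound on $\operatorname{tr}_\o\o_{u_\beta}=n+\Delta_\o u_\beta$ independent of $\beta$. I would obtain it by the Aubin--Yau--Siu maximum-principle computation applied to $\log\operatorname{tr}_\o\o_{u_\beta}-A\,u_\beta$ for a suitable $A$ depending only on a lower bound $-B$ for the bisectional curvature of $\o$. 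Differentiating the equation gives $\Delta_{\o_{u_\beta}}\log\det\o_{u_\beta}=\beta\,\Delta_{\o_{u_\beta}}(u_\beta-b)$, and the only new feature relative to the classical estimate is this temperature term; the point is that $\beta\,\Delta_{\o_{u_\beta}}u_\beta$ enters with a favorable sign and that $\Delta_\o b$ is bounded by hypothesis, so the $\beta$-dependence is absorbed rather than amplified. This yields $0\le\o_{P(b)}\le C\o$ with $C=C(\o,\|b\|_{C^{1\bar1}})$; since $\o_{P(b)}\ge0$ already forces $\Delta_\o P(b)\ge -n$, the two-sided bound gives the claimed $C^{1\bar1}$ estimate, and in fact bounds the full complex Hessian $\i\ddbar P(b)$ as a current.

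For (ii) the extra ingredient is the integrality $[\o]\in H^2(M,\Bbb Z)$, which lets me replace the transcendental approximation by an algebraic one. Choosing an ample line bundle $L$ with a metric $h$ of curvature $\o$, I would approximate $P(b)$ by normalized logarithms of partial Bergman data,
\beq
\psi_k:=\tfrac1{k}\log\sup\big\{\,|s|^2_{h^k}\ :\ s\in H^0(M,L^k),\ \sup_M|s|^2_{h^k}e^{-kb}\le1\,\big\},
\eeq
which converge uniformly to $P(b)$ by the Tian-type expansion underlying Berman--Demailly \cite{BD}. The gain is that each $\psi_k$ is built from \emph{holomorphic} data, and one establishes a uniform-in-$k$ two-sided bound on its real Hessian via $L^2$/sup estimates for the Bergman kernel of the weight $kb$. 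Concretely, the complex Hessian of $\psi_k$ reproduces the bounded curvature (as in part (i)), while holomorphicity of the sections is precisely what controls the purely holomorphic part $\partial_i\partial_j\psi_k$; combining these upgrades the complex-Hessian bound to a bound on all real second derivatives, which then passes to the limit $k\to\infty$.

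The main obstacle in both parts is the same in spirit: the envelope solves a \emph{degenerate} problem, $(\o+\i\ddbar P(b))^n=0$ off the contact set, so one cannot differentiate the equation directly and must argue through a non-degenerate regularization while keeping every constant independent of the regularization parameter. For (i) the delicate point is the sign and absorption of the temperature term $\beta\,\Delta_{\o_{u_\beta}}(u_\beta-b)$ in the Laplacian estimate. For (ii) it is the control of the holomorphic Hessian $\partial_i\partial_j$, which genuinely requires the algebraic structure---a bounded \emph{complex} Hessian alone does not bound the real Hessian---and this is exactly where the hypothesis $[\o]\in H^2(M,\Bbb Z)$ is indispensable.
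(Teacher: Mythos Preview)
The paper does not prove Theorem \ref{BDreg}. It is stated there with the preface ``we recall the second order estimates of Berman \cite[Theorem 1.1, Remark 1.8]{Be} and Berman--Demailly \cite[Theorem 1.4]{BD}'' and is then used as a black box in the proof of Theorem \ref{PRegThm}. So there is no ``paper's own proof'' to compare against; what you have written is an attempt to sketch the arguments behind the cited results.

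Your sketch is broadly in the right spirit, with one caveat. For part (i) you follow Berman's zero-temperature scheme \cite{BeApprox} rather than the Demailly-type regularization of \cite{BD}; this is a legitimate alternative route, and your key claim---that in the Aubin--Yau inequality applied to $\log\operatorname{tr}_\o\o_{u_\beta}-Au_\beta$ the term $\beta\Delta_\o u_\beta/\operatorname{tr}_\o\o_{u_\beta}=\beta(1-n/\operatorname{tr}_\o\o_{u_\beta})$ enters with a favorable sign---is correct and is exactly what makes the estimate $\beta$-independent. For part (ii) your outline matches the Bergman-kernel approach of \cite{Be}. The one place I would push back is the sentence ``holomorphicity of the sections is precisely what controls the purely holomorphic part $\partial_i\partial_j\psi_k$'': this is more a slogan than an argument. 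The actual mechanism in \cite{Be} is that for a peak section $s$ realizing the sup at a point, the function $|s|^2_{h^k}$ has a local maximum there, so its full real Hessian is nonpositive; combined with the global upper bound $|s|^2_{h^k}\le e^{k\psi_k}$ and the lower bound from the Bergman kernel asymptotics, this yields the two-sided real-Hessian estimate. You should make that step explicit rather than appeal to ``holomorphicity'' in the abstract.
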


\begin{proof}[Proof of Theorem \ref{PRegThm}]

For both parts (i) and (ii) we first assume
$v_0,v_1 \in C^{1,1}(M)$.
Indeed, by an approximation argument, this suffices also for treating part (i).

Take a covering of $M$ by charts, that we assume without loss of generality are unit balls of the form $\{B_1(x_j)\}_{j = 1}^k$ (possible as $M$
is compact), such that the balls $\{B_{1/8}(x_j)\}_{j = 1}^k$ still cover $M$.
Let $\{\rho_j\}_{j=1}^k$ be a partition of unity subordinate to the
latter covering.
Without loss of generality,
we also
assume that in a neighborhood of each
$B_1(x_j)$ the metric
$\omega$  has a \K potential $w_j \in C^\infty$.

Let $h_j \in \SH({B_1(x_j)})$ be the upper envelope
$$
h_j:=
\sup \{v \in \SH(B_1(x_j))\,:\,  v \leq
\min\{v_0|_{B_1(x_j)}+w_j,v_1|_{B_1(x_j)}+w_j\}\}.
$$
If $\vp$ is an $\o$-psh function then
$w_j+\vp\in\PSH(B_1(x_j))$ and therefore
$w_j+\vp\in\SH(B_1(x_j))$. Thus,
$P(v_0,v_1)|_{B_1(x_j)} \leq h_j-w_j \leq \min \{ v_0,v_1\}|_{B_1(x_j)}$, and
by Theorem \ref{P_reg_prop},
\beq\label{hjEq}
\| h_j\|_{C^{2}}
\leq
C(\| w_j\|_{C^{2}},\| v_0\|_{C^{2}},\| v_1\|_{C^{2}}).
\eeq
Set
$b:= \sum_{j=1}^k \rho_j (h_j - w_j)$.
Then
\beq\label{bC2EstEq}
\| b\|_{C^{2}}
\leq
C(\{\| w_j\|_{C^{2}}\}_{j=1}^k,\{\| \rho_j\|_{C^{2}}\}_{j=1}^k,\| v_0\|_{C^{2}},\| v_1\|_{C^{2}})\le C(M,\o,\| v_0\|_{C^{2}},\| v_1\|_{C^{2}}).
\eeq
It follows that $P(v_0,v_1) \leq b \leq \min\{ v_0,v_1\}$ as we noticed above that $P(v_0,v_1)|_{B_1(x_j)} \leq h_j-w_j \leq \min\{ v_0,v_1\}|_{B_1(x_j)}$.
Thus, $P(b)=P(v_0,v_1)$
and so part (i) of the theorem follows from \eqref{bC2EstEq}
and Theorem \ref{BDreg}.
Part (ii) follows as well if we can show that
$$\| b\|_{C^{1\b1}}
\leq
C(\{\| w_j\|_{C^{1\b1}}\}_{j=1}^k,\{\| \rho_j\|_{C^{1\b1}}\}_{j=1}^k,\| v_0\|_{C^{1\b1}},\| v_1\|_{C^{1\b1}})\le C(M,\o,\| v_0\|_{C^{1\b1}},\| v_1\|_{C^{1\b1}}).
$$

This estimate indeed holds since on the incidence set
$\{h_j-w_j=\min\{v_0|_{B_1(x_j)},v_1|_{B_1(x_j)}\}$ the function
$\Delta h_j$ equals either $\Delta v_0|_{B_1(x_j)}$
or $\Delta v_1|_{B_1(x_j)}$ a.e. with respect to the Lebesgue measure, while $h_j$ is harmonic on the complement of the incidence set.
\end{proof}

Corollary \ref{PRealRegThm} follows from the previous theorem because
by Proposition \ref{HRMAProp} the convex rooftop envelope solves the HRMA,
and hence also the HCMA on the associated toric manifold. 

\begin{remark}
{\rm
One can give a different proof of part (ii) of Theorem \ref{PRegThm}
using results on regularity of Mabuchi geodesics
together with Theorem \ref{envdual} and Proposition \ref{family_est}.
Indeed, let $[0,1] \ni t \to a_t \in \PSH(M,\omega) \cap L^\infty(M)$ be the weak geodesic joining $a_0 :=P(v_0)$ with $a_1 :=P(v_1)$.
By Theorem \ref{BDreg} both $P(v_0)$ and $P(v_1)$ have bounded Laplacian. By
Berman--Demailly \cite[Corollary 4.7]{BD} (see He \cite{He} for a different proof)
 so does each $a_t$ for each $t \in [0,1]$.
Since $P(v_0,v_1)=P(P(v_0),P(v_1))$, by Theorem
\ref{envdual} we have
$$P(v_0,v_1)=a^*_0.$$
Finally,
$| \Delta_{\omega_0} a^*_0|$ is bounded
by Proposition \ref{family_est} below.
}
\end{remark}
The following estimate is very likely well-known, 
although we were not able to find a precise reference.
\begin{proposition}\label{family_est}
Let $\{ v_a\}_{a \in A}$ be a
uniformly locally bounded
 family of functions on a domain $D \subset \Bbb C^n$. Suppose that
 $| \Delta v_a|\leq B$
for all $a \in A$,
and that $v_{\min}=\inf _{a \in A}v_a$  is psh on $D$. Then,
$| \Delta v_{\min}| \leq B$.
\end{proposition}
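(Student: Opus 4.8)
The plan is to reduce the statement to a classical convergence theorem of Brelot and Cartan for lower envelopes of superharmonic functions (the subharmonic counterpart of Bedford--Taylor's ``negligible sets'' theorem used above). First observe that the lower bound is automatic: by hypothesis $v_{\min}$ is psh, hence subharmonic, so $\Delta v_{\min}$ is a nonnegative distribution and $\Delta v_{\min}\ge 0\ge -B$; thus only the upper bound $\Delta v_{\min}\le B$ in the distributional sense requires proof. Observe also that, since $\Delta v_a\in L^\infty(D)$, elliptic ($W^{2,p}$) regularity gives $v_a\in C^{1,\alpha}_{\mathrm{loc}}(D)$, and uniform local boundedness of the family makes $v_{\min}=\inf_{a\in A}v_a$ a finite-valued, locally bounded, hence $L^1_{\mathrm{loc}}$ function --- so that $\Delta v_{\min}$ is well defined as a distribution in the first place.

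Pick $\lambda>0$ with $\Delta(\lambda|x|^2)\equiv B$ on $\Bbb R^{2n}\cong\Bbb C^n$ (explicitly $\lambda=B/4n$), and set $g_a:=v_a-\lambda|x|^2$. Then $\Delta g_a=\Delta v_a-B\le 0$ and $g_a$ is continuous, so each $g_a$ is superharmonic on $D$; moreover $\{g_a\}_{a\in A}$ is locally uniformly bounded below because $\{v_a\}_{a\in A}$ is. Since $\inf_{a\in A}g_a=v_{\min}-\lambda|x|^2=:f$, the Brelot--Cartan theorem applies: the lower semicontinuous regularization $\hat f$ of $f$ is superharmonic on $D$, and $\hat f=f$ outside a polar set; in particular $\hat f=f$ Lebesgue-almost everywhere. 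It is essential here that each $g_a$ be a genuine superharmonic function, which is why the continuity of $v_a$ is invoked: an arbitrary pointwise infimum of merely a.e.-defined functions carries no information.

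Since $f=\hat f$ in $L^1_{\mathrm{loc}}(D)$, their distributional Laplacians coincide, and $\Delta\hat f\le 0$ because $\hat f$ is superharmonic. Hence $\Delta v_{\min}=\Delta f+\Delta(\lambda|x|^2)=\Delta\hat f+B\le B$ as distributions. Together with $\Delta v_{\min}\ge 0$ from the first paragraph, this shows that $\Delta v_{\min}$ is a positive Radon measure dominated by $B\,dV$, i.e.\ it is absolutely continuous with density lying in $[0,B]$ a.e.; equivalently $\Delta v_{\min}\in L^\infty(D)$ with $|\Delta v_{\min}|\le B$, which is the claim.

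I do not expect a real obstacle here: the only non-elementary input is the Brelot--Cartan convergence theorem, and the only subtle point is the minor remark that it is enough to identify $v_{\min}$ with a superharmonic function up to a Lebesgue-null set in order to control its distributional Laplacian. If one wishes to avoid that theorem, a hands-on alternative works too: comparing $v_{\min}$ with each $g_a$ one obtains directly, for every ball $B_r(x)\Subset D$, that the spherical average satisfies $|\partial B_r(x)|^{-1}\int_{\partial B_r(x)}v_{\min}\,dS-v_{\min}(x)\le\lambda r^2$ (use $v_{\min}\le v_a$ in the average term and choose $a$ with $v_a(x)$ close to $v_{\min}(x)$); since $v_{\min}$ is subharmonic, the Riesz representation rewrites the left-hand side as $\int_0^r\kappa_n\,\rho^{1-2n}\,(\Delta v_{\min})(B_\rho(x))\,d\rho$ for the explicit dimensional constant $\kappa_n$, and the Lebesgue differentiation theorem then forces the density of $\Delta v_{\min}$ to be $\le B$ a.e., the sharp constant $B$ emerging precisely because $\kappa_n$ and $\lambda$ are the matching constants.
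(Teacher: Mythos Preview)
Your argument is correct and follows essentially the same strategy as the paper: subtract $\lambda|x|^2$ with $\Delta(\lambda|x|^2)=B$ so that each $g_a=v_a-\lambda|x|^2$ is superharmonic, then argue that $\inf_a g_a$ is (essentially) superharmonic, which gives $\Delta v_{\min}\le B$. The only difference is in the technical justification of that last step. You invoke the Brelot--Cartan theorem to say the lsc regularization $\hat f$ of $f=\inf_a g_a$ is superharmonic with $\hat f=f$ off a polar set, and then pass to distributional Laplacians. The paper instead uses the Calder\'on--Zygmund estimate to show that the family $\{g_a\}$ (equivalently $\{-g_a\}$) is uniformly locally Lipschitz, so the pointwise supremum of $-g_a$ is itself Lipschitz continuous; then Choquet's lemma gives subharmonicity directly, with no regularization needed. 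The paper's route is slightly more self-contained (it avoids quoting a named potential-theoretic theorem and sidesteps the ``equal up to a polar set'' discussion), while your route is a bit more conceptual and would work even without the uniform Lipschitz control. Either is perfectly acceptable here.
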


One can also assume instead of uniform local boundedness that $v_{\min}$
itself is locally bounded.
\begin{proof} By our assumption $\Delta v_{\min} \geq 0$, hence we only have to prove that $\Delta v_{\min} \leq B$. Our assumptions also imply that the functions $B|z|^2/2n - u_a$
are subharmonic on $D$ for any $a \in \mathcal A$. By the Zygmund-Calderon estimate we also have that the $C^{0,1}$ norm of the functions $B|z|^2/2n - u_a$ is uniformly bounded on any relatively compact open subset of $D$. This implies that $B|z|^2/2n - v_{\min}=\sup_{a \in \mathcal A}(B|z|^2/2n - v_a)$ is locally Lipschitz continuous, hence by Choquet's lemma also subharmonic. This in turn implies that $\Delta v_{\min} \leq B$.
\end{proof}

\subsection{Regularity of rooftop subharmonic-envelopes}
\label{proofofP_reg_prop}

We now prove Theorem \ref{P_reg_prop}. Let us fix some notation. Let
$b_0,b_1 \in
C^{1,1}({B_1})$ with $B_1\subset \Bbb C^n =\Bbb R^{2n}$.
The envelope $\bmin$ \eqref{bminEq} is  upper semi-continuous hence it is subharmonic by Choquet's lemma.
We call $\benv$ the {\it subharmonic-envelope of the rooftop obstacle
$\min \{ b_0,b_1\}$}.

Let
\beq\label{b10Eq}
b_{10}:=
b_1 - b_0\in C^{1,1}({B_1}),
\eeq
and denote the {\it contact set} (or {\it coincidence set}) by
\beq\label{LambdaEq}
\Lambda:= \{x\in B_1\,:\, b_{\env}(x) = \min\{b_0,b_1 \}(x)\}. 
\eeq
We call the complement of $\Lambda$ in $B_1$ the {\it non-coincidence set}.

Our first result assures that whenever $x_0$ is a regular point of the level set $b_{10}^{-1}(0)$,
then $x_0$ is contained in the non-coincidence set,
along with a small open ball of radius uniformly
proportional to $|\nabla b_{10}(x_0)|$.

\begin{proposition}
\label{cushintheorem}
For $b_0,b_1 \in C^{1,1}({B_1})$,
using the notation we introduced, there exists
$C=C(n)/(1 + \|b_0\|_{C^{2}}+\|b_1\|_{C^{2}})$
such that for any $x_0 \in b_{10}^{-1}(0)\cap B_{1/2}$
(recall (\ref{b10Eq}) and (\ref{LambdaEq})),
$$
\Lambda \cap B_{C|\nabla b_{10}(x_0)|}(x_0)  = \emptyset.
$$
\end{proposition}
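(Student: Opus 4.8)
The plan is to exploit the fact that near a regular point $x_0$ of the level set $b_{10}^{-1}(0)$, the two obstacles $b_0$ and $b_1$ cross transversally, so that on one side of the level set $b_0 < b_1$ and on the other $b_1 < b_0$. This means $\min\{b_0,b_1\}$ looks, to leading order, like $b_0(x_0) + \langle \nabla b_0(x_0), x-x_0\rangle - |\langle \nabla b_{10}(x_0), x - x_0 \rangle|/2$ plus a controlled quadratic error (controlled by $\|b_0\|_{C^2}, \|b_1\|_{C^2}$). I would first fix $x_0 \in b_{10}^{-1}(0)\cap B_{1/2}$, write $g := |\nabla b_{10}(x_0)|$ (assuming $g>0$, otherwise there is nothing to prove), and set up the normalization $x = x_0 + g\, y$, rescaling so that the relevant ball $B_{Cg}(x_0)$ becomes $B_C$ in the $y$-variable. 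In these coordinates the linear parts of $b_0$ and $b_1$ differ by the single linear functional $\langle e, y\rangle$ where $e = \nabla b_{10}(x_0)/g$ is a unit vector, and the Hessian contributions are $O(g)$ on $B_C$, i.e. as small as we like by taking $g$ small (if $g$ is not small, $C|\nabla b_{10}(x_0)|$ exceeds the available radius anyway and one works on a fixed small ball — I would state the proposition's constant to absorb this).

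The core of the argument is a barrier/comparison step: I would construct an explicit subharmonic function $w$ on a ball $B_{Cg}(x_0)$ that lies strictly below $\min\{b_0,b_1\}$ on that ball and which, crucially, is strictly below $\min\{b_0,b_1\}$ at $x_0$ itself by a definite amount — roughly, $w$ is obtained by taking the linear part common to $b_0,b_1$, subtracting $\alpha g$ for a small constant $\alpha$, and adding a small multiple of $|y|^2$ (which is subharmonic) to make $w$ dip below $\min\{b_0,b_1\}$ only near the center while staying below it near the boundary sphere. Because $\bmin$ is the \emph{largest} subharmonic minorant of $\min\{b_0,b_1\}$, we get $\bmin(x_0) \ge w(x_0) > \min\{b_0,b_1\}(x_0) - (\text{something smaller than the gap})$; more precisely one wants to conclude $\bmin(x_0) < \min\{b_0,b_1\}(x_0)$, i.e. $x_0 \notin \Lambda$, and then propagate this: since $\bmin - \min\{b_0,b_1\}$ is, on the non-coincidence set, a (super)harmonic-type quantity controlled by the $C^2$ data, a definite strict gap at $x_0$ forces a whole ball of radius $\asymp g$ around $x_0$ to remain in the non-coincidence set. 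The cleanest way to run this last part is: suppose for contradiction that $\Lambda \cap B_{Cg}(x_0) \ne \emptyset$, pick a contact point $p$ there, and use that at $p$ the envelope touches $\min\{b_0,b_1\}$ from below, together with the sub-mean-value inequality for $\bmin$ on a ball centered at $x_0$ of radius comparable to $|p-x_0|$, to derive that the average of $\min\{b_0,b_1\}$ over that sphere is $\ge \bmin(x_0) \ge w(x_0)$; comparing with the explicit Taylor expansion of $\min\{b_0,b_1\}$ (which on a sphere of radius $r$ around $x_0$ has average $\le b_0(x_0) + O(r^2\|b_i\|_{C^2})$ because the $|\langle e,\cdot\rangle|/2$ term averages to a \emph{positive} quantity $\sim r$ that must be subtracted) yields the contradiction once $C$ is chosen as $c(n)/(1+\|b_0\|_{C^2}+\|b_1\|_{C^2})$.

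The main obstacle I anticipate is making the "the kink helps you" mechanism quantitative and uniform: one must show that the concavity-in-one-direction coming from $\min$ of the two linear parts produces a deficit of size $\asymp g$ (i.e. first order in the rescaled picture) in the spherical average of $\min\{b_0,b_1\}$ around $x_0$, and that this genuinely beats the $O(r^2)$ Hessian errors precisely on the scale $r \asymp g/(1+\|b_i\|_{C^2})$, which is exactly why the constant has that form. A secondary technical point is handling the boundary behavior of the barrier $w$ — one needs $w \le \min\{b_0,b_1\}$ on \emph{all} of $B_{Cg}(x_0)$, not just near the center, and to extend $w$ (say by $-\infty$, or by patching with the trivial lower barrier $\inf_{B_1}\min\{b_0,b_1\}$) to a global competitor in the definition \eqref{bminEq} of $\bmin$; this is routine once the local estimate is in place, using that a subharmonic function defined only locally can be glued below $\bmin$ via the standard "raise on a ball" construction. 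I would present the computation in the rescaled coordinates throughout to keep the constants transparent.
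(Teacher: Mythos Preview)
Your proposal identifies the correct mechanism --- the transversal crossing of $b_0,b_1$ creates a ``kink'' that forces the spherical average of $\min\{b_0,b_1\}$ around $x_0$ to drop by an amount $\asymp gr$ (with $g=|\nabla b_{10}(x_0)|$), and balancing this against the $O(r^2\|b_i\|_{C^2})$ Hessian error is exactly what produces the stated form of the constant. This is also the heart of the paper's argument.

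However, the way you package this has a genuine gap: your barrier $w$ points the wrong way. A subharmonic function $w$ lying below $\min\{b_0,b_1\}$ is a competitor in the supremum defining $\benv$, so it yields the \emph{lower} bound $w\le\benv$. To prove $x\notin\Lambda$ you need the \emph{upper} bound $\benv(x)<\min\{b_0,b_1\}(x)$, and no lower barrier can give you that. Your contradiction step inherits the same defect: the chain ``average of $\min\{b_0,b_1\}$ over the sphere $\ge \benv(x_0)\ge w(x_0)$'' is true whether or not a contact point $p$ exists nearby (the sub-mean-value inequality for $\benv$ holds unconditionally), so $p$ plays no role and you never reach a contradiction. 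In symbols, you end up comparing $w(x_0)\approx b_0(x_0)-\alpha g$ with the upper bound $b_0(x_0)-c_n gr+O(r^2)$ on the average; for $r\le Cg$ these are perfectly compatible.

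The paper fixes this by using an \emph{upper} barrier rather than a lower one. It shows that the affine (hence harmonic) function
\[
L(x)=b_0(x_0)-2Cg^2+\langle\nabla b_0(x_0),x-x_0\rangle
\]
is sandwiched: $\benv(x)<L(x)<\min\{b_0,b_1\}(x)$ on $B_{Cg}(x_0)$. The inequality $L<\min\{b_0,b_1\}$ is a direct Taylor estimate. The inequality $\benv<L$ is obtained by majorizing $\benv(x)$ by the Poisson integral on a ball $B_r(x_0)$ of the boundary data $\min\{b_0,b_1\}$; the kink term $\min\{0,\langle\nabla b_{10}(x_0),\xi\rangle\}$ integrates against the (positive) Poisson kernel to a quantity $\le -c(n)\,gr$, which beats the $O(r^2)$ error on the correct scale. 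This is precisely your ``the kink helps you'' computation, but applied as an upper bound on $\benv$ via the Poisson representation rather than as a lower bound via a subharmonic competitor. If you redirect your argument this way --- replace the subharmonic $w$ by a harmonic majorant coming from the Poisson extension of $\min\{b_0,b_1\}$ --- the rest of your outline goes through.
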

\begin{proof} We fix $x_0 \in b_{10}^{-1}(0)\cap B_{1/2}$.
We will prove that $\benv<\min\{b_0,b_1\}$ on $B_{C|\nabla b_{10}(x_0)|}(x_0)$
by finding a linear function sandwiched between these two functions.
More precisely, the proposition follows from the estimate
\begin{equation}\label{cushin}
\benv(x) < b_0(x_0)-2C |\nabla b_{10}(x_0)|^2 + \langle \nabla b_0(x_0),x-x_0 \rangle
< \min\{b_0,b_1\}(x) , \ x  \in B_{C|\nabla b_{10}(x_0)|}(x_0),
\end{equation}
for $C$ as in the statement.

For the second inequality in \eqref{cushin},
observe that for any $x \in B_r(x_0), \ r \leq 1/2$,
\begin{flalign*}
\min\{b_0,b_1\}(x)-&b_0(x_0)
\geq
\min_{i\in\{0,1\}} \langle\nabla b_i(x_0),x-x_0 \rangle
-(\|b_0\|_{C^{2}}+\|b_1\|_{C^{2}})|x-x_0|^2 &\\
\geq&
\langle \nabla b_0(x_0),x-x_0 \rangle
+ \min\{0,\langle \nabla b_{10}(x_0),x-x_0 \rangle\}
- (\|b_0\|_{C^{2}}+\|b_1\|_{C^{2}})|x-x_0|^2  &\\
>&
\langle \nabla b_0(x_0),x-x_0 \rangle - |\nabla b_{10}(x_0)|r
- (\|b_0\|_{C^{2}}+\|b_1\|_{C^{2}})r^2.
\end{flalign*}
Set $r=r'|\nabla b_{10}(x_0)|$. Then,
whenever $r' \leq 1/(1 + \|b_0\|_{C^{2}}+\|b_1\|_{C^{2}})$,
$$
(\|b_0\|_{C^{2}}+\|b_1\|_{C^{2}})r^2
=
(\|b_0\|_{C^{2}}+\|b_1\|_{C^{2}})r'|\nabla b_{10}(x_0)|r
\le r' |\nabla b_{10}(x_0)|^2.
$$
Thus, as desired,
\begin{equation}\label{cushin1}
\min\{b_0,b_1\}(x) > b_0(x_0)-2r' |\nabla b_{10}(x_0)|^2
+ \langle \nabla b_0(x_0),x-x_0\rangle, \ x \in B_{r'|\nabla b_{10}(x_0)|}(x_0).
\end{equation}

Now we turn to the first inequality in \eqref{cushin}.
Fix $r \leq 1/2$.
As before, by Taylor's formula, for $x \in B_r(x_0)$,
\beq\label{minEst1Eq}
\min\{b_0,b_1\}(x)\leq
 b_0(x_0)+ \langle \nabla b_0(x_0),x-x_0 \rangle
+\min\{0,\langle \nabla b_{10}(x_0),x-x_0 \rangle\} +
(\|b_0\|_{C^{2}}+\|b_1\|_{C^{2}})r^2.
\eeq
Note that $h$ is subharmonic,
while
$B_r(x_0)\ni x
\mapsto
\langle \nabla b_0(x_0),x-x_0 \rangle
+ (\|b_0\|_{C^{2}}+\|b_1\|_{C^{2}})r^2$ is harmonic.
Combining this with \eqref{minEst1Eq} and the fact that
$h \leq \min\{ b_0,b_1\}$, it follows that
\begin{flalign*}
\benv(x) \leq& b_0(x_0)+ \langle \nabla b_0(x_0),x-x_0 \rangle
+
\int_{\partial B_r(x_0)} P_{ r}(x-x_0,\xi)\min\{0,\langle\nabla b_{10}(x_0),\xi\rangle\}d\sigma(\xi)
\\&+ (\|b_0\|_{C^{2}}+\|b_1\|_{C^{2}})r^2,
\end{flalign*}
where
$P_{r}(x,\xi)=(r^2 - |x|^2)/(2n\omega_{2n} r|x-\xi|^{2n})$
 is the Poisson kernel
of the ball $B_r(x)$ which is positive.
For any $x \in B_{r/2}(x_0)$ and $\xi\in \partial B_r(x_0)$,
there is a uniform estimate $|P_{ r}(x-x_0,\xi)|\le C(n)r^{1-2n}$.
Also, $\langle\nabla b_{10}(x_0),\xi\rangle = |\nabla b_{10}(x_0)||\xi|\cos\alpha,$
where $\alpha$ is the angle betwen $\xi$
and $\nabla b_{10}(x_0)$ in the plane they generate.
Now,
since the integrand is negative, one can estimate
it by considering only the quarter sphere
$\partial B^{++}_r(x_0)$ where the angle between $\xi$ and $x-x_0$
is in the range $(-\pi/4,\pi/4)$. Then,
$$
\int_{\partial B_r(x_0)} P_{ r}(x-x_0,\xi)\min\{0,\nabla b_{10}(x_0)\xi\}d\sigma(\xi)
<
\frac1{\sqrt2}
\int_{\partial B^{++}_r(x_0)} P_{ r}(x-x_0,\xi)
|\nabla b_{10}(x_0)|rd\sigma(\xi),
$$
which, in turn, is bounded from above by $-C|\nabla b_{10}(x_0)|r$ for $x \in B(x_0, r/2)$.
Thus,
there exists $C'=C'(n) < 1$ such that
$$
\benv(x)\leq  b_0(x_0)+\langle \nabla b_0(x_0),x-x_0 \rangle - C'|\nabla b_{10}(x_0)|r+(\|
b_0\|_{C^{2}}+\|b_1\|_{C^{2}})r^2,
$$
$x \in B(x_0, r/2)$. By taking any
$
\tilde r\leq \frac{C'}{2(1 + \|b_0\|_{C^{2}}+\|b_1\|_{C^{2}})}$
one has that
$\tilde r |\nabla b_{10}(x_0)| < 1$. Thus,
$$
\benv(x)
\leq
b_0(x_0)+
\langle \nabla b_0(x_0),x-x_0 \rangle
-\frac{C'}{2}|\nabla b_{10}(x_0)|^2\tilde r,
\quad
\h{for any $x\in B_{\tilde r|\nabla b_{10}(x_0)|/2}(x_0)$}.
$$
Therefore, for any  choice $r'' < {C'\tilde r}/{4}$,
\begin{equation}\label{cushin2}
\benv(x)
<
b_0(x_0)+\langle \nabla b_0(x_0),x-x_0 \rangle - 2|\nabla b_{10}(x_0)|^2r'',
\quad
\h{for any $x \in B_{r''|\nabla v(x_0)|}(x_0)$}.
\end{equation}
The estimate \eqref{cushin} with
$C=\min \{r',r''\}$ follows from \eqref{cushin1} and \eqref{cushin2}.
\end{proof}

Before we consider the interior regularity of $\benv$,
we prove an adaptation to our setting
of the standard quadratic growth lemma (cf. \cite[Lemma 3]{Caf}).
It shows, roughly, that the envelope $\benv$ approximates the obstacle
$\min\{b_0,b_1\}$ at least to second order.
This is quite intuitive in the classical case of an obstacle of class
$C^{1,1}$. In our setting where the obstacle is only Lipschitz,
the proof relies on Proposition \ref{cushintheorem}.

\begin{proposition}
\label{QuadraticProp}
Let $b_0,b_1 \in C^{1,1}({B_1}(x_0))$.
Suppose $x_0\in\Lambda\cap B_{1/4}(x_0)$,
with
$\min \{ b_0,b_1\}(x_0)= b_i(x_0)$ for $i\in\{0,1\}$.
Then, there exists $C = C(\| b_0\|_{C^{2}(B_1)},\| b_1\|_{C^{2}(B_1)})$
such that (recall
(\ref{LambdaEq}) and (\ref{bminEq})),
\beq\label{QuadGrowthEstEq}
|\benv(x) - b_i(x_0) - \langle \nabla b_i(x_0), x - x_0\rangle| \leq C |x-x_0|^2,
\quad
\h{for all $x \in
B_{1/8}(x_0)$}.
\eeq
\end{proposition}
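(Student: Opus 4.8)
The plan is to prove the two halves of \eqref{QuadGrowthEstEq} separately. Throughout, write $i\in\{0,1\}$ for the index with $\min\{b_0,b_1\}(x_0)=b_i(x_0)$, put $\Lambda_0:=\|b_0\|_{C^2(B_1)}+\|b_1\|_{C^2(B_1)}$, and let $L(x):=b_i(x_0)+\langle\nabla b_i(x_0),x-x_0\rangle$ be the first--order Taylor polynomial of $b_i$ at $x_0$; since $x_0\in\Lambda$ we have $\benv(x_0)=b_i(x_0)=L(x_0)$, so \eqref{QuadGrowthEstEq} is the assertion $|\benv(x)-L(x)|\le C|x-x_0|^2$ on $B_{1/8}(x_0)$. (I take for granted that $x_0$ lies deep enough inside $B_1$ that Proposition~\ref{cushintheorem} applies at $x_0$ and at points of $b_{10}^{-1}(0)$ close to it.) The upper bound is immediate: $\benv\le\min\{b_0,b_1\}\le b_i$ and Taylor's theorem gives $b_i(x)\le L(x)+\tfrac12\|b_i\|_{C^2}|x-x_0|^2$, so $\benv(x)-L(x)\le C|x-x_0|^2$. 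Everything is thus in the lower bound $\benv(x)\ge L(x)-C|x-x_0|^2$.

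First I would show that the \emph{obstacle itself} is paraboloid--close to $L$ at $x_0$, i.e. $|\min\{b_0,b_1\}(x)-L(x)|\le C(n,\Lambda_0)|x-x_0|^2$ on $B_{1/8}(x_0)$; only the lower half, $\min\{b_0,b_1\}(x)\ge L(x)-C|x-x_0|^2$, is not trivial, and this is exactly where Proposition~\ref{cushintheorem} enters, through the dichotomy $b_0(x_0)=b_1(x_0)$ or not. If $b_0(x_0)=b_1(x_0)$, then $x_0\in\Lambda\cap b_{10}^{-1}(0)$, so Proposition~\ref{cushintheorem} forces $\nabla b_{10}(x_0)=0$ (otherwise a whole ball about $x_0$ would miss $\Lambda$); hence $b_0$ and $b_1$ have the same $1$--jet at $x_0$, so $|b_{10}(x)|\le\Lambda_0|x-x_0|^2$, and then $\min\{b_0,b_1\}(x)=b_i(x)+\min\{0,\pm b_{10}(x)\}\ge L(x)-C|x-x_0|^2$ by Taylor's theorem for $b_i$. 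If $b_0(x_0)\ne b_1(x_0)$, say $g:=b_{1-i}(x_0)-b_i(x_0)>0$, then applying Proposition~\ref{cushintheorem} at the zero of $b_{10}$ nearest to $x_0$ and using $x_0\in\Lambda$ forces a bound of the shape $g\ge c(n)\Lambda_0^{-1}|\nabla b_{10}(x_0)|^2$; a short elementary computation then shows that on $B_{1/8}(x_0)$ either $|x-x_0|\lesssim\sqrt g$, in which case $b_{1-i}(x)\ge b_i(x)$ and $\min\{b_0,b_1\}(x)=b_i(x)\ge L(x)-C|x-x_0|^2$, or $|x-x_0|\gtrsim\sqrt g$, in which case the only dangerous term $|\langle\nabla b_{10}(x_0),x-x_0\rangle|\le|\nabla b_{10}(x_0)|\,|x-x_0|\lesssim\sqrt g\,|x-x_0|$ is itself $\le C|x-x_0|^2$; again Taylor for $b_i$ and $b_{1-i}$ finishes it.

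Given this, the lower bound reduces to the quadratic growth estimate $\min\{b_0,b_1\}(x)-\benv(x)\le C|x-x_0|^2$ on $B_{1/8}(x_0)$ — combining it with $\min\{b_0,b_1\}(x)\ge L(x)-C|x-x_0|^2$ yields $\benv(x)\ge L(x)-C|x-x_0|^2$. This last estimate is the analogue in our setting of Caffarelli's lemma \cite[Lemma 3]{Caf}. I would set $v:=\min\{b_0,b_1\}-\benv\ge0$; then $v(x_0)=0$, and since $\min\{b_0,b_1\}$ is semiconcave (with $\Delta\min\{b_0,b_1\}\le C\Lambda_0$ in the sense of distributions) while $\benv$ is subharmonic, the function $v-C\Lambda_0|x-x_0|^2$ is superharmonic for suitable $C=C(n)$; its super--mean--value inequality at $x_0$ yields $\oint_{\partial B_r(x_0)}v\le C\Lambda_0\, r^2$ for all small $r$. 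One then upgrades this spherical--average bound to the pointwise bound $v(x)\le C|x-x_0|^2$ by the standard free-boundary arguments (as in \cite{Caf}; see also \cite{CafSalsa,PetrosyanSU}), using the harmonicity of $\benv$ on the non-coincidence set $B_1\setminus\Lambda$ together with the a priori estimate of Proposition~\ref{cushintheorem}.

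I expect the upgrade in the last paragraph to be the main obstacle. In the classical situation the passage from the spherical average to the pointwise estimate uses the sub--mean--value property of $v$, i.e.\ that $v$ is subharmonic; here this fails, because $\min\{b_0,b_1\}$ is merely Lipschitz — and not superharmonic — along the ridge $\{b_0=b_1\}$, so $\Delta v$ may carry a negative singular part there. Proposition~\ref{cushintheorem} is precisely the device that compensates: it confines the loss of regularity to the ridge and shows that around each regular point of the ridge there is a ball of a priori size lying \emph{inside} the non-coincidence set, so the ridge is buried in the (fat) non-coincidence set where $\benv$ is harmonic, and the comparison and barrier constructions of the classical proof carry over with constants depending only on $n$ and the $C^2$ norms of $b_0,b_1$.
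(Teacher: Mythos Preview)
Your two-step plan---(A) show the obstacle $\min\{b_0,b_1\}$ itself satisfies $|\min\{b_0,b_1\}(x)-L(x)|\le C|x-x_0|^2$, then (B) deduce the same for $\benv$---is sound and genuinely different from the paper's route, but you have misidentified where the work lies.

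Step (A) is correct and is the heart of the matter. The key inequality $g\ge c\,\Lambda_0^{-1}|\nabla b_{10}(x_0)|^2$ does follow from Proposition~\ref{cushintheorem} at a nearby zero, though the ``short elementary computation'' you allude to needs one more ingredient than you state: if $|\nabla b_{10}(x_0)|^2$ were large compared to $\Lambda_0 g$, then moving from $x_0$ along the direction of $-\nabla b_{10}(x_0)$ produces a zero $\tilde x$ of $b_{10}$ at distance $d\lesssim g/|\nabla b_{10}(x_0)|$ (so in particular the nearest zero is at most this far); at $\tilde x$ the gradient is still comparable to $|\nabla b_{10}(x_0)|$, and now Proposition~\ref{cushintheorem} forces $d\gtrsim|\nabla b_{10}(x_0)|/\Lambda_0$, contradicting the previous bound. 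Merely citing Proposition~\ref{cushintheorem} at the nearest zero gives only $|\nabla b_{10}(x_0)|\lesssim\Lambda_0 d$ and the \emph{upper} bound $g\lesssim\Lambda_0 d^2$, which do not combine to what you want; the upper bound on $d$ is essential. Once $g\gtrsim|\nabla b_{10}(x_0)|^2/\Lambda_0$ is in hand, your dichotomy on $|x-x_0|$ versus $\sqrt{g/\Lambda_0}$ works as you describe.

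Step (B), however, is \emph{not} the main obstacle. Once (A) is established, Caffarelli's $s_1+s_2$ decomposition on $B_r(x_0)$ finishes immediately: with $s:=\benv-L-Mr^2$ and $s_1$ its harmonic part, Harnack bounds $s_1$, and the infimum of $s_2$ is attained at some $x_1\in\Lambda\cap B_r(x_0)$, where $s_2(x_1)\ge s(x_1)=\min\{b_0,b_1\}(x_1)-L(x_1)-Mr^2\ge -Cr^2$ \emph{by (A)}. Your alternative route---super-mean-value for $v=\min\{b_0,b_1\}-\benv$ followed by a pointwise ``upgrade'' that Proposition~\ref{cushintheorem} is supposed to rescue---is both unnecessary and, as you yourself note, genuinely obstructed by the failure of $v$ to be subharmonic. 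Proposition~\ref{cushintheorem} has already done its job in step (A); it plays no role in (B).

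For comparison, the paper does not isolate (A) at all. It runs the $s_1+s_2$ decomposition directly and, at the minimum point $x_1\in\Lambda$ of $s_2$, splits according to whether $\min\{b_0,b_1\}(x_1)$ equals $b_i(x_1)$ (the classical estimate) or $b_{1-i}(x_1)$. In the latter case there is a zero $\tilde x$ of $b_{10}$ on the segment $[x_0,x_1]$, and Proposition~\ref{cushintheorem} at $\tilde x$---using that \emph{both} endpoints $x_0,x_1$ lie in $\Lambda$---gives $|\nabla b_{10}(\tilde x)|\lesssim|x_1-x_0|\le r$, which controls the single dangerous cross-term in $s(x_1)$ by $Cr^2$. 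The second contact point $x_1$, manufactured by the decomposition itself, is what makes the application of Proposition~\ref{cushintheorem} clean and avoids your auxiliary inequality on $g$.
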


Of course, $\benv$ equals $b_i$ up to infinite order on the interior of
$\Lambda$, so
one could phrase \eqref{QuadGrowthEstEq} as
$$
|\benv(x) - \benv(x_0) - \langle \nabla \benv(x_0), x - x_0\rangle| \leq C |x-x_0|^2,
$$
whenever $x_0$ lies in the interior of $\Lambda$.
However, the key is, of course, that the estimate \eqref{QuadGrowthEstEq}
also holds on $\partial\Lambda$ (the {\it free boundary}),
and it precisely shows that $\benv$ is therefore differentiable
at points on $\partial\Lambda$, and in fact that its $C^{1,1}$ norm there
is uniformly bounded. These are the problematic points,
since $\benv$ is harmonic (and thus, well-behaved) on the complement
of $\Lambda$.

\begin{proof} 
Let $x \in \Lambda\cap B_{1/4}(x_0)$,
and suppose that
$\min\{b_0,b_1\}(x_0)=b_0(x_0)$ (the case $i=1$ is treated in the same manner).
Set
\beq\label{MEq}
M := \| b_0\|_{C^{2}}.
\eeq
 Then,
\beq\label{benvaboveEstEq}
\benv(x) - b_0(x_0) -
\langle \nabla b_0(x_0), x - x_0\rangle \leq \benv(x) - b_0(x)+ M|x -x_0|^2  \leq  M|x-x_0|^2.
\eeq
Hence, it remains
to prove that
\begin{equation}\label{quadest}
-C|x-x_0|^2 \leq \benv(x) - b_0(x_0) - \langle \nabla b_0(x_0), x - x_0\rangle,
\quad
\h{for all $x \in
B_{1/8}(x_0)$
}.
\end{equation}
Fix now $r \leq 1/4$. On $B_r(x_0)$,
decompose
\beq\label{sEq}
s(x):=\benv(x) - b_0(x_0) - \langle \nabla b_0(x_0), x - x_0\rangle - Mr^2
\eeq
into the sum
$s|_{B_r(x_0)}= s_1 + s_2$, with
$s_1$ is harmonic on $B_r(x_0)$ with $s_1|_{\partial B_r(x_0)}=s|_{\partial B_r(x_0)}$.

Since $s_1$ is harmonic, $s \leq s_1 \leq0$.
Also, by the Harnack inequality for non-positive harmonic functions
it follows that
\begin{equation} \label{s_1}
-Mr^2 =s(x_0)\leq s_1(x_0) \leq C \inf_{B_{r/2}(x_0)} s_1,
\end{equation}
with $C$ independent of $r$.

\begin{claim}
\label{muClaim}
Let $\mu_{s_2}$ denote the measure associated to $\Delta s_2$.
Then either $s_2 \equiv 0$ or $\inf_{x \in B_r(x_0)} s_2$ is attained inside $B_r(x_0)$ on the support of $\mu_{s_2}$.
\end{claim}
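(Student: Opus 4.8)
The plan is to analyze the Riesz-type decomposition $s|_{B_r(x_0)} = s_1 + s_2$ recorded in \eqref{sEq}, where $s_1$ is the harmonic replacement of $s$ on $B_r(x_0)$ and thus $s_2 = s - s_1$ vanishes on $\partial B_r(x_0)$, with $\Delta s_2 = \Delta s = \mu_{s_2} \geq 0$ (since $\benv$ is subharmonic, $s$ is too, and the affine and constant pieces contribute nothing to the Laplacian). So $s_2$ is itself subharmonic on $B_r(x_0)$ and continuous up to the boundary, where it is zero. The assertion to prove is the dichotomy: either $s_2 \equiv 0$, or its infimum over $B_r(x_0)$ is attained at an interior point lying in $\operatorname{supp}\mu_{s_2}$.

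First I would dispose of the boundary behavior: since $s_2$ is subharmonic on $B_r(x_0)$ and $s_2 = 0$ on $\partial B_r(x_0)$, the maximum principle gives $s_2 \leq 0$ throughout $B_r(x_0)$. Hence if $s_2 \not\equiv 0$, then $\inf_{B_r(x_0)} s_2 < 0$, so the infimum cannot be attained on the boundary; by continuity on the closed ball (or lower semicontinuity suffices) the infimum is attained at some interior point $y \in B_r(x_0)$ with $s_2(y) < 0$. It remains only to show $y \in \operatorname{supp}\mu_{s_2}$.

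The key step, and the place requiring a genuine argument rather than soft reasoning, is this last inclusion. The idea is: if $y \notin \operatorname{supp}\mu_{s_2}$, then $\mu_{s_2}$ vanishes on some open ball $B_\delta(y) \subset B_r(x_0)$, so $s_2$ is harmonic there; but $s_2$ attains its global (over $B_r(x_0)$) minimum — in particular a local minimum — at the interior point $y$ of that ball, and a nonconstant harmonic function has no interior local minimum by the strong minimum principle. Therefore $s_2$ is constant on $B_\delta(y)$, equal to $s_2(y) < 0$; propagating this along a chain of overlapping balls covering $B_r(x_0)$ (using connectedness and that on any ball avoiding $\operatorname{supp}\mu_{s_2}$ the function is harmonic, and that it stays at its minimum value wherever the harmonic piece touches it) forces $s_2$ to be constantly $s_2(y)$ on a relatively closed and open subset. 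A cleaner way to package this: the set where $s_2$ equals its minimum is closed; on its interior $s_2$ is harmonic hence $\mu_{s_2}$-null there, and one shows the set is also open away from $\operatorname{supp}\mu_{s_2}$, eventually contradicting $s_2 = 0$ on $\partial B_r(x_0)$ unless $\operatorname{supp}\mu_{s_2}$ already contains a minimizing point.

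The main obstacle is making the propagation argument airtight without circularity — specifically, ensuring that one genuinely reaches either the boundary (contradiction, since $s_2 = 0 \neq s_2(y)$ there) or a point of $\operatorname{supp}\mu_{s_2}$. I expect the slickest formulation is a direct one: let $m = \inf_{B_r(x_0)} s_2 < 0$ and suppose for contradiction that $\{s_2 = m\} \cap \operatorname{supp}\mu_{s_2} = \emptyset$. Then $\{s_2 = m\}$ is a nonempty compact subset of the open set $B_r(x_0) \setminus \operatorname{supp}\mu_{s_2}$, on which $s_2$ is harmonic; pick $y \in \{s_2 = m\}$ and a ball $B_\delta(y)$ inside this open set — by the minimum principle for harmonic functions $s_2 \equiv m$ on $B_\delta(y)$, so $\{s_2 = m\}$ is open in $B_r(x_0) \setminus \operatorname{supp}\mu_{s_2}$; combined with closedness and connectedness considerations (the complement of $\operatorname{supp}\mu_{s_2}$ need not be connected, so one argues component-by-component, but the component of $y$ must meet $\partial B_r(x_0)$ since $\operatorname{supp}\mu_{s_2}$ is compact in the open ball), one concludes $s_2 \equiv m$ up to part of $\partial B_r(x_0)$, contradicting $s_2|_{\partial B_r(x_0)} = 0$. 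This yields the claim.
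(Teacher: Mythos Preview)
Your approach is the same as the paper's: continuity of $s_2$ (the paper cites Lipschitz regularity of $\benv$ from \cite{Caf}), the observation $s_2\le 0$ with $s_2|_{\partial B_r(x_0)}=0$, and the minimum principle on a ball where $s_2$ is harmonic. The paper simply says that a harmonic function cannot have an interior minimum, forcing the minimizer to $\partial B_r(x_0)$ and hence $s_2\equiv 0$; your propagation argument is a more careful version of exactly this step.

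One small correction: your final justification, ``the component of $y$ must meet $\partial B_r(x_0)$ since $\operatorname{supp}\mu_{s_2}$ is compact in the open ball,'' is not quite right---the support need not be compact, and even if it were, its complement could have components not reaching the boundary. The clean fix is: on the connected component $U$ of $B_r(x_0)\setminus\operatorname{supp}\mu_{s_2}$ containing $y$, you have $s_2\equiv m$; any point of $\partial U\cap \overline{B_r(x_0)}$ lies either on $\partial B_r(x_0)$ (giving $m=0$, contradiction) or in $\operatorname{supp}\mu_{s_2}\cap B_r(x_0)$ (giving, by continuity, a minimizer in the support, contradicting your hypothesis). Either way you are done.
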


\bpf
First, since the obstacle $\min\{b_0,b_1\}$ is Lipschitz, it follows
from \cite[Lemma 3(a)]{Caf} that $\benv$ is Lipschitz.
In particular, $\benv$ is continuous and so $\inf_{x \in \overline{B_r(x_0)}} s_2$ is attained.

Now, suppose that the infimum is attained at a point $p$ on the complement of the support of $\mu_{s_2}$.
By definition of support, there is an open ball containing $q$ on which $s_2$ is harmonic. But, a harmonic function cannot obtain an interior minimum, which implies
that $p$ must be on the boundary of $B_r(x_0)$. But we have $s_2|_{\partial B_r(x_0)}=0$ and $s \leq s_1 \leq 0$ implies $s_2 \leq 0$. Hence, if the infimum of $s_2$ is obtained on the boundary then $s_2 \equiv 0.$
\epf
If $s_2 \equiv 0$ then \eqref{quadest} follows from \eqref{s_1}. Hence we can suppose that $\inf_{x \in B_r(x_0)}s_2$ is attained at $x_1 \in  B_r(x_0)$. By Claim \ref{muClaim}, $x_1\in\Lambda$ since the support of $\mu_{s_2}$
in  $B_r(x_0)$ is
equal to the support $\mu_{\benv}$ (the measure associated to $\Delta\benv$) in  $B_r(x_0)$ that is, in turn, contained in
$\Lambda\cap B_r(x_0)$.
Suppose first that $\min\{b_0,b_1\}(x_1)=b_0(x_1).$
Thus, since $x_1 \in \Lambda$, $\benv(x_1)=b_0(x_1)$. Thus, using \eqref{MEq}
and \eqref{sEq},
\begin{equation} \label{s_2a}
\inf_{B_r(x_0)} s_2
=
s_2(x_1)
\ge s(x_1)= b_0(x_1) - b_0(x_0) - \langle \nabla b_0(x_0), x_1 - x_0\rangle - Mr^2  \geq -2Mr^2.
\end{equation}
Combining \eqref{benvaboveEstEq}, \eqref{s_1}, and \eqref{s_2a}
and the definition of $s$ \eqref{sEq},
proves \eqref{quadest} in this case.

\begin{figure}
\begin{center}
\includegraphics[trim=3cm 21cm 3cm 0cm, clip=true,width=3.9in]{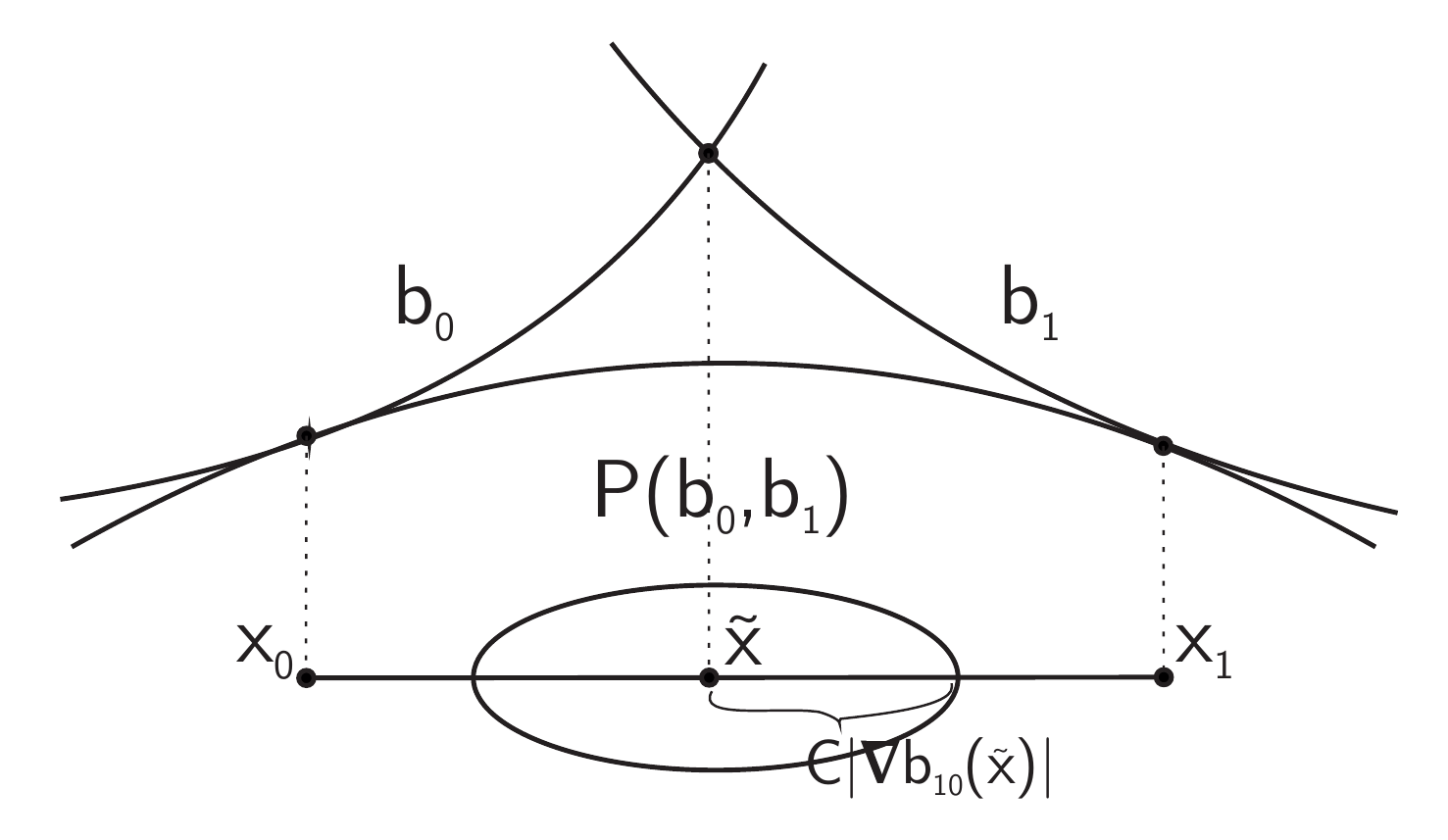}
\caption{The barriers $b_0,b_1$ and the envelope $P(b_0,b_1)$}
\label{figP(b0b1)}
\end{center}
\end{figure}

Suppose now that $\min\{b_0,b_1\}(x_1)=b_1(x_1)$
(see Figure \ref{figP(b0b1)}).
This case is new compared with the classical setting of Caffarelli \cite{Caf}
and will rely crucially on Proposition \ref{cushintheorem}.
Since $\min\{b_0,b_1\}(x_0)=b_0(x_0)$, it follows by continuity of $b_0$ and $b_1$
that there exists a point $\tilde x$
on the straight line segment $\{(1-t)x_0+tx_1\,:\, t\in[0,1]\}$ connecting $x_0$ and $x_1$
 such that $b_1(\tilde x)=b_0(\tilde x)$, i.e. $\tilde x \in
b_{10}^{-1}(0)\cap B_{r}(x_0)$. Hence,
$$
\baeq
\label{s2_est}
\inf_{B_r(x_0)} s_2
=
s_2(x_1)
\ge s(x_1)
& =  b_1(x_1) - b_0(x_0) - \langle \nabla b_0(x_0), x_1 - x_0\rangle -Mr^2
\cr
& =
(b_1(x_1) - b_1(\tilde x) -
\langle \nabla b_1(\tilde x), x_1 - \tilde x\rangle) &\nonumber
\cr
&\q+ (\langle \nabla b_1(\tilde x), x_1 - \tilde x\rangle - \langle \nabla b_0(\tilde x), x_1 - \tilde x\rangle) & \nonumber
\cr
&\q+ (\langle \nabla b_0(\tilde x), x_1 - \tilde x\rangle - \langle \nabla b_0(x_0), x_1 - \tilde x\rangle)
&
\cr
&\q + (b_0(\tilde x) - b_0(x_0) - \langle \nabla b_0(x_0), \tilde x - x_0\rangle) - Mr^2\nonumber
\eaeq
$$
We now estimate from below the last four lines.
The first line is minorized by $-2\|b_1
\|_{C^{2}}|x_1-\tilde x|^2\ge -cr^2$, while
the third and fourth lines are
minorized by $-2\|b_0\|_{C^{2}}(|x_1-\tilde x|^2+|x_0-\tilde x|^2+r^2)\ge-cr^2$
(recall \eqref{MEq} and that $|x_1-x_0|\le r$, thus $|x_i-\tilde x|\le r$), for some $c=c(||b_0||_{C^{2}},||b_1||_{C^{2}})$.
In sum,
\beq\label{InSumEq}
\inf_{x \in B_r(x_0)} s_2
\ge
(\langle \nabla b_1(\tilde x), x_1 - \tilde x\rangle - \langle \nabla b_0(\tilde x),
x_1 - \tilde x\rangle)- Cr^2
\geq -| \nabla b_{10}(\tilde x)| |x_1 - x_0|- Cr^2.
\eeq
Now, by Proposition \ref{cushintheorem},
for some
$C=C(n)/(1 + \|b_0\|_{C^{2}}+\|b_1\|_{C^{2}})$,
there is a ball of radius $C|\nabla b_{10}(\tilde x)|$
around $\tilde x$ that does not intersect $\Lambda$.
But $x_0,x_1$ are both in $\Lambda$. Thus,
$$
C|\nabla b_{10}(\tilde x)| \le |x_i - \tilde x|, \q \h{for $i=0,1$},
$$
hence,
$$
2C|\nabla b_{10}(\tilde x)| \le |x_1 - x_0|.
$$
Plugging this back into \eqref{InSumEq} yields
\begin{equation}\label{s_2b}
\inf_{B_{r/2}(x_0)} s_2 \ge
\inf_{B_r(x_0)} s_2\geq - C'r^2,
\end{equation}
for $C'=C'(||b_0||_{C^{2}},||b_1||_{C^{2}})$.
Thus, \eqref{quadest} holds also in this case.
This concludes the proof of the Proposition.
\end{proof}

Finally, we are in a position to prove the interior $C^{1,1}$ regularity of $\benv$
 \eqref{bminEq}.

\begin{proposition}
\label{InteriorProp}
\label{localreg} Let $b_0,b_1 \in C^{1,1}({B_1})$.
There exists $C = C(\| b_0\|_{C^{2}},\| b_1\|_{C^{2}})$ such that
$$
\| \benv\|_{C^{2}(B_{1/8})}\leq C.
$$
\end{proposition}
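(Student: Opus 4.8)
\emph{Proof plan.} The approach is the standard bootstrap from the regularity theory of the obstacle problem for the Laplacian (cf. \cite{Caf,CafSalsa,PetrosyanSU}); the two new inputs are the quadratic growth estimate of Proposition \ref{QuadraticProp} and the free-boundary ball estimate of Proposition \ref{cushintheorem}. We will use three preliminary facts about $\benv$, all standard: it is Lipschitz on $B_{1/2}$ (by \cite[Lemma 3(a)]{Caf}); it is harmonic on the open non-coincidence set $B_1\setminus\Lambda$ (the Poisson modification of $\benv$ on a small ball about a point where $\benv<\min\{b_0,b_1\}$ is still admissible, so $\benv$ equals it there); and $\|\benv\|_{L^\infty(B_1)}\le\max_i\|b_i\|_{C^0}$ (the constant $-\max_i\|b_i\|_{C^0}$ is a competitor from below, while $\min\{b_0,b_1\}$ bounds it from above). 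Here $\Lambda$ is closed, being the zero set of the nonnegative continuous function $\min\{b_0,b_1\}-\benv$.

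\emph{Step 1: a Hessian bound on the non-coincidence set.} Fix $y\in B_{1/8}\setminus\Lambda$ and set $d:=\min\{\operatorname{dist}(y,\Lambda),\tfrac1{16}\}$, so that $\benv$ is harmonic on $B_d(y)\subset B_1$. If $d=\tfrac1{16}$, the interior second-derivative estimate for harmonic functions together with the $L^\infty$ bound above gives $\sup_{B_{d/2}(y)}|D^2\benv|\le C$. If $d<\tfrac1{16}$, choose a nearest contact point $x_0\in\Lambda$ with $|x_0-y|=d$; say $\min\{b_0,b_1\}(x_0)=b_0(x_0)$, and put $L(x):=b_0(x_0)+\langle\nabla b_0(x_0),x-x_0\rangle$. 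Then $x_0\in B_{1/4}$ and $B_d(y)\subset B_{2d}(x_0)\subset B_{1/8}(x_0)$, so Proposition \ref{QuadraticProp} gives $|\benv-L|\le C|x-x_0|^2\le 4Cd^2$ on $B_d(y)$; since $\benv-L$ is harmonic there, the interior estimate yields $\sup_{B_{d/2}(y)}|D^2\benv|\le C(n)d^{-2}\cdot 4Cd^2=C'$. In all cases $|D^2\benv|\le C$ on $B_{d/2}(y)$, with $C=C(n,\|b_0\|_{C^2},\|b_1\|_{C^2})$.

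\emph{Step 2: a second-order expansion on the coincidence set.} For every $x_0\in\Lambda\cap B_{1/8}$, Proposition \ref{QuadraticProp} says precisely that $\benv$ is differentiable at $x_0$ with $\nabla\benv(x_0)=\nabla b_i(x_0)$, where $b_i$ attains the minimum at $x_0$, and that $|\benv(x)-\benv(x_0)-\langle\nabla\benv(x_0),x-x_0\rangle|\le C|x-x_0|^2$ for $x\in B_{1/8}(x_0)$. Combined with Step 1, this shows $\nabla\benv$ is well-defined at every point of $B_{1/8}$, equals the Lipschitz vector field $\nabla b_i$ on $\Lambda$, and extends continuously up to $\partial\Lambda$ from the non-coincidence side (the expansion above is valid in a full neighborhood of $x_0$).

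\emph{Step 3: globalization, and the main difficulty.} It remains to bound the Lipschitz constant of $\nabla\benv$ on $B_{1/8}$. Given $y_1,y_2\in B_{1/8}$, if they lie in a common ball on which Step 1 applies one compares gradients directly; otherwise one passes through nearest contact points $p_1,p_2\in\Lambda$, so that $|y_i-p_i|\le C|y_1-y_2|$ and $|\nabla\benv(y_i)-\nabla\benv(p_i)|\le C|y_1-y_2|$ by Steps 1 and 2, and it suffices to bound $|\nabla\benv(p_1)-\nabla\benv(p_2)|=|\nabla b_{i_1}(p_1)-\nabla b_{i_2}(p_2)|$. When $i_1=i_2$ this is at most $\|b_{i_1}\|_{C^2}|p_1-p_2|\le C|y_1-y_2|$. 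The only case with no counterpart in the classical one-obstacle theory is $i_1\neq i_2$, and this is the main obstacle: one picks $\tilde p$ on the segment $[p_1,p_2]$ with $b_{10}(\tilde p)=0$ and splits the difference through $\nabla b_0(\tilde p)$ and $\nabla b_1(\tilde p)$; the only term not controlled by $\|b_0\|_{C^2}+\|b_1\|_{C^2}$ is $|\nabla b_{10}(\tilde p)|=|\nabla b_0(\tilde p)-\nabla b_1(\tilde p)|$. Here Proposition \ref{cushintheorem} is exactly what is needed: a ball of radius proportional to $|\nabla b_{10}(\tilde p)|$ about $\tilde p$ avoids $\Lambda$, while $p_1,p_2\in\Lambda$ and $|\tilde p-p_i|\le|p_1-p_2|$, whence $|\nabla b_{10}(\tilde p)|\le C|p_1-p_2|\le C|y_1-y_2|$. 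Therefore $\nabla\benv$ is Lipschitz on $B_{1/8}$ with constant $\le C(n,\|b_0\|_{C^2},\|b_1\|_{C^2})$, i.e. $\|\benv\|_{C^2(B_{1/8})}\le C$; the remaining case distinctions in this last step are routine bookkeeping.
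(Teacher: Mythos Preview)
Your proof is correct and follows the paper's approach closely. Steps 1 and 2 are essentially identical to the paper's argument: the paper likewise bounds $|\nabla^2\benv(x_0)|$ for $x_0\in\Lambda^c\cap B_{1/8}$ by transferring the harmonic interior estimate from a nearest contact point via Proposition \ref{QuadraticProp}, and uses Proposition \ref{QuadraticProp} directly for $x_0\in\Lambda$.

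The one genuine difference is your Step 3. The paper does not carry out an explicit case analysis on pairs $y_1,y_2$; it simply asserts that the uniform quadratic-growth estimate $|\benv(x)-\benv(x_0)-\langle\nabla\benv(x_0),x-x_0\rangle|\le C|x-x_0|^2$ for all $x_0,x\in B_{1/8}$ is equivalent to $\nabla\benv$ being Lipschitz, then verifies that estimate separately for $x_0\in\Lambda$ (Proposition \ref{QuadraticProp}) and $x_0\in\Lambda^c$ (the pointwise Hessian bound). In particular, the paper does \emph{not} invoke Proposition \ref{cushintheorem} a second time; its only appearance is inside the proof of Proposition \ref{QuadraticProp}. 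Your direct re-use of Proposition \ref{cushintheorem} to handle the $i_1\neq i_2$ case is valid, but redundant: the bound $|\nabla b_{10}(\tilde p)|\le C|p_1-p_2|$ you extract from it is already encoded in the quadratic growth of Proposition \ref{QuadraticProp} applied at $p_1$ and $p_2$ (or, equivalently, one can pass from Steps 1--2 to the global Lipschitz bound by a first-crossing-point argument along the segment $[y_1,y_2]$, or by the standard ``semiconvex $+$ semiconcave $\Rightarrow C^{1,1}$'' characterization). What your approach buys is a fully self-contained Step 3 that avoids citing that characterization; what the paper's approach buys is brevity, at the cost of leaving the passage from pointwise Hessian bounds on $\Lambda^c$ to global $C^{1,1}$ somewhat implicit.
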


\begin{proof}
First, $\benv$ is differentiable on $ B_{1/4}$.
This is immediate on $\Lambda^c\cap B_{1/4}$ since $\benv$ is harmonic there,
while
on $\Lambda\cap B_{1/4}$ this follows from Proposition \ref{QuadraticProp}.
Now, $\nabla h$ is Lipschitz continuous on $B_{1/8}$ with
Lipschitz constant $C$ if 
$$
|\benv(x) - \benv(x_0) - \langle \nabla \benv(x_0), x - x_0\rangle| \leq C |x-x_0|^2,
\quad \forall x_0,x \in B_{1/8}.
$$
This is shown in Proposition \ref{QuadraticProp} for $x_0 \in \Lambda \cap B_{1/8}$,
so suppose that
$x_0 \in \Lambda^c \cap B_{1/8}$.
Denote by $\rho$
the distance of $x_0$ to $\Lambda$.
If $\rho > 1/16$, then we are done since $\benv$ is harmonic on $B_{1/8}(x_0)$
and so $||\benv||_{C^{2}(B_{1/8}(x_0))}\le C||\benv||_{L^\infty(B_{1/4}(x_0))}
\le C(\|b_0\|_{L^\infty(B_{1/4}(x_0))},
\|b_1\|_{L^\infty(B_{1/4}(x_0))})$
(here we used the fact that (i) $\benv\le\min\{b_0,b_1\}\le\min\{\max b_0,\max b_1\}\}$,
(ii) since $b_0,b_1$ are bounded from below, the constant function $\min\{\min b_0,\min b_1\}$
is a candidate in the supremum for $\benv$; thus, $\benv\ge \min\{\min b_0,\min b_1\}$,
(iii) the $C^k$ norm of a harmonic function on a half-ball is estimated by its $C^0$
norm on the ball, divided by the radius of the ball to the $k$-th power---this
follows from the Poisson representation formula).
If $\rho\leq 1/16$ a different argument is needed since the radius of the ball
on which $\benv$ is harmonic can be arbitrarily small. Thus, let $x_1 \in \partial\Lambda \cap B(0,1/4)$
be a point at distance exactly $\rho$ from $x_0$.
Since $\benv$ is harmonic on $B_{\rho}(x_0)$
so is $\benv(x) - \benv(x_1) - \langle\nabla \benv(x_1),x-x_1\rangle$.
Thus, one may express the latter
in terms of its boundary values and the Poisson kernel.
Since,
$$
\nabla^2\benv
=
\nabla^2(\benv(x) - \benv(x_1) - \langle\nabla \benv(x_1),x-x_1\rangle)
$$
then differentiating the aforementioned integral representation twice under the integral
sign yields that
$$
\|\nabla^2 \benv(x_0)\|
\leq
C
\frac{\sup_{x \in B_{\rho}(x_0)}|\benv(x) - \benv(x_1) - \langle\nabla \benv(x_1),x-x_1\rangle|}{\rho^2}.
$$
Finally, since $B_{\rho}(x_0)\subset B_{2\rho}(x_1)$ it follows
from Proposition \ref{QuadraticProp} that the right hand side is majorized by
$$
C\frac{\sup_{x \in B(x_1,2\rho)}|\benv(x) - \benv(x_1) - \langle\nabla \benv(x_1),x-x_1\rangle|}
{\rho^2}
\leq C,
$$
as desired.
\end{proof}

\section*{Acknowledgments} 

The first version of this article was written in Spring 2013. The final touches took place a year later while YAR was visiting Chalmers
University, and he is grateful to the Mathematics Department for an excellent working atmosphere,
and to R. Berman and B. Berndtsson for making the visit possible and for their warm hospitality.
We thank them, as well as C. Kiselman, L. Lempert, and A. Petrosyan for 
their interest, encouragement, and related discussions.
This research was supported by NSF grants DMS-0802923, 1162070, 1206284,
and a Sloan Research Fellowship.

\begin{spacing}{0}

\def\bi{\bibitem}

\end{spacing}

\bigskip

\bigskip

{\sc Purdue University} 

{\tt tdarvas@math.purdue.edu}

\bigskip

{\sc University of Maryland} 

{\tt yanir@umd.edu}

\end{document}